\newtheorem{theorem}{Theorem}[section]
\newtheorem{proposition}{Proposition}[section]
\renewenvironment{proof}{\noindent{\bf Proof.}}{\hfill $\blacksquare$ \vspace*{4mm}}
\newcommand{\nt}{|\hspace{-0.7pt}|\hspace{-0.7pt}|}
\begin{document}

%\noindent
%УДК 517.956.35

%\vspace{9pt}

\begin{center}
{\Large \bf Local Existence of Contact Discontinuities in\\[6pt]  Relativistic Magnetohydrodynamics}
\end{center}

\begin{center}
{\large \it Yu.~L.~Trakhinin}
\end{center}

\begin{center}
Sobolev Institute of Mathematics, Novosibirsk, 630090, Russia\\
and\\
Novosibirsk State University, Novosibirsk, 630090, Russia\\[6pt]
E-mail: trakhin@math.nsc.ru
\end{center}

\vspace*{9pt}

\noindent
We study the free boundary problem for a contact discontinuity for the system of relativistic magnetohydrodynamics. A surface of contact discontinuity is a characteristic of this system with no flow across the discontinuity for which the pressure, the velocity and the magnetic field are continuous whereas the density, the entropy and the temperature may have a jump. For the two-dimensional case, we prove the local-in-time existence in Sobolev spaces
of a unique solution of the free boundary problem provided that the Rayleigh--Taylor sign condition on the jump of the normal derivative of the pressure is satisfied
at each point of the initial discontinuity.\\[9pt]
{\it Keywords}: relativistic magnetohydrodynamics, free boundary problem, contact discontinuity, local-in-time existence and uniqueness theorem

\section{Statement of the free boundary problem}
\label{s1}

We consider the equations of relativistic magnetohydrodynamics (RMHD) go\-verning the  motion of an inviscid perfectly conducting relativistic gas (in particular, plasma) in a magnetic field \cite{An,Lich}:
\begin{equation}
\nabla_{\alpha}(\rho u^{\alpha})=0, \quad
\nabla_{\alpha}T^{\alpha\beta}=0,
\quad
\nabla_{\alpha}(u^{\alpha}b^{\beta} -u^{\beta}b^{\alpha})=0,
\label{1}
\end{equation}
where $\nabla_{\alpha}$ is the covariant derivative with respect to
the Lorentzian metric $g={\rm diag}\, (-1,1,1,1)$ of the space-time with the
components $g_{\alpha\beta}$ (we restrict ourselves to the case of special relativity), $\rho$ is the proper rest-mass density of the gas, $u^{\alpha}$ are components of the 4-velocity,
\[
T^{\alpha\beta}=(\rho h +B^2)u^{\alpha}u^{\beta} +qg^{\alpha\beta} -b^{\alpha}b^{\beta},
\]
$h= 1+e +(p/\rho )$ is the relativistic specific enthalpy, $p$ is the pressure, $e=e(\rho ,S)$ is the specific internal energy, $S$ is the specific entropy, $B^2=b^{\alpha}b_{\alpha}$ are components of the magnetic field 4-vector, and $q= p+\frac{1}{2}B^2$ is the total pressure. The 4-vectors satisfy the conditions
$u^{\alpha}u_{\alpha} =-1$ and $u^{\alpha}b_{\alpha} =0$, and we choose such a scaling that the speed of the light $c=1$.

Let $(x^0,x)$ be inertial coordinates, with $t=x^0$ a time coordinate and $x=(x^1,x^2,x^3)$ space coordinates. Then
\[
u^0=-u_0=\Gamma ,\quad u^i=u_i =\Gamma v_i,\quad i=\overline{1,3},\quad u=(u_1,u_2,u_3)=\Gamma v,
\]
\[
\Gamma^2=1+|u|^2,\quad
b^0=-b_0=(u\cdot H),\quad
b^i=b_i=\frac{H_i}{\Gamma}+(u\cdot H)v_i,
\]
\[
b=(b_1,b_2,b_3),\quad
B^2=|b|^2-b_0^2=\frac{|H|^2}{\Gamma^2} +(v\cdot H)^2>0,
\]
where $\Gamma=(1-|v|^2)^{-1/2}$ is the Lorentz factor, $v=(v_1,v_2,v_3)$ is the 3--velocity and $H=(H_1,H_2,H_3)$ is the magnetic field 3-vector. Here and below we drop the superscript ${\sf T}$ while writing down a column-vector $a=(a_1,a_2,a_3)$ in the line whereas $a^{\sf T}$ appearing inside of a formula denotes a row-vector. Passing to 3-vectors, we can now rewrite the RMHD equations \eqref{1} in the form of the following system of conservation laws:
\begin{align}
& \partial_t(\rho\Gamma ) +{\rm div}\, (\rho u )=0,\label{2}\\[3pt]
& \partial_t\left(\rho h \Gamma u +|H|^2v-(v\cdot H)H\right)
%\nonumber \\[3pt]
%&  \qquad\qquad\qquad
+{\rm div} \left((\rho h +B^2) u\otimes u  -b\otimes b\right) + {\nabla}q=0,\label{3}\\[3pt]
& \partial_t(\rho h\Gamma^2 +|H|^2-q ) +{\rm div} \left(\rho h\Gamma u +|H|^2v-(v\cdot H)H \right)=0,\label{4}\\[3pt]
& \partial_tH -{\nabla}\times (v {\times} H)=0,\label{5}
\end{align}
where $\partial_t=\partial /\partial t$, $\nabla =(\partial_1,\partial_2,\partial_3)$ and  $\partial_i=\partial /\partial x^i$. Moreover, the first equation in the Maxwell equations containing in \eqref{1},
\begin{equation}
{\rm div}\, H=0,\label{6}
\end{equation}
should be now considered as the divergence constraint on the initial data $U (0,x)=U_0(x)$ for the unknown $U=(p,u,H,S)$.

Using \eqref{6} and the additional conservation law (entropy conservation)
\begin{equation}
\partial_t(\rho\Gamma S ) +{\rm div}\, (\rho S u )=0\label{7}
\end{equation}
which holds on smooth solutions of equations \eqref{2}--\eqref{5}, and following Godunov's symmetrization procedure \cite{Go}, we can symmetrize the conservation laws \eqref{2}--\eqref{5} in terms of a vector of canonical variables $Q=Q(U)$. This was done by  Ruggeri and Strumia \cite{RuSt} and also by Anile and Pennisi \cite{AP,An}. However, a concrete form of symmetric matrices was not found in \cite{RuSt,AP}. Moreover, if we deal with an initial-boundary value problem (especially with a free boundary problem)  it is very inconvenient and often technically impossible to work in terms of the vector $Q$.

On the one hand, from  $Q$ we can return to the vector of primitive variables $U$ keeping the symmetry property (see \cite{BT}). But, on the other hand, for RMHD finding a concrete form of symmetric matrices associated to the vector $Q$ and returning from $Q$ to $U$ is connected with unimaginable (or even almost unrealizable in practice) calculations. Another possible way of symmetrization of system \eqref{2}--\eqref{5} could be based on rewriting this system in a nonconservative symmetric form for the vector $U$ by analogy with  \cite{Tcpam} where this was done for relativistic gas dynamics. But this is also hardly realizable in practice for the cumbersome RMHD system.

To overcome this serious technical difficulty Freist\"uhler and Trakhinin \cite{FT} symmetrized the RMHD system by using the Lorentz transform. Namely, taking into account \eqref{6}, for the fluid rest frame ($v=0$) we can rewrite \eqref{2}, \eqref{3}, \eqref{5} and \eqref{7} in a nonconservative form which will be already a symmetric system. After that we should properly apply the Lorentz transform to this system to get a corresponding symmetric system in the LAB-frame. Referring to \cite{FT} for details, here we just write down the resulting symmetrization of RMHD:
\begin{equation}
\label{8}
A_0(U )\partial_tU+\sum_{j=1}^3A_j(U )\partial_jU=0,
\end{equation}
where
\[
A_0=\left(
\begin{array}{cccc}
{\displaystyle\frac{\Gamma}{\rho a^2}} & v^{\sf T} &0 &0 \\[3pt]
v & \mathcal{A} &0 &0 \\
0& 0 & \mathcal{M} &0 \\
0 &0&0 & 1
\end{array}\right),\qquad A_j=\left(
\begin{array}{cccc}
{\displaystyle\frac{u_j}{\rho a^2}} & e_j^{\sf T} &0 &0\\[3pt]
e_j & \mathcal{A}_j &{\mathcal{N}_j}^{\sf T} &0\\
0& \mathcal{N}_j & v_j\mathcal{M} &0 \\
0&0&0&v_j
\end{array}\right),
\]
\[
a^2=p_{\rho}(\rho ,S),\quad e_j= (\delta_{1j},\delta_{2j},\delta_{3j}),
\]
\[
\begin{split}
\mathcal{A}=\bigg( \rho h & \Gamma +\frac{|H|^2}{\Gamma} \bigg) I - \bigg( \rho h \Gamma + \frac{|H|^2+B^2}{\Gamma} \bigg)  v\otimes v   \\  & -\frac{1}{\Gamma}\, H\otimes H +
\frac{(v\cdot H)}{\Gamma}
\bigl( v\otimes H + H\otimes v\bigr) ,
\end{split}
\]
\[
\mathcal{M} =\frac{1}{\Gamma}\, (I +u\otimes u),\quad \mathcal{N}_j= \frac{1}{\Gamma}\,b\otimes e_j -\frac{v_j}{\Gamma}\, b\otimes v - \frac{H_j}{\Gamma^2}\,I,
\]
\[
\begin{split}
\mathcal{A}_j=
v_j& \left\{ \left( \rho h \Gamma +\frac{|H|^2}{\Gamma} \right) I - \left( \rho h \Gamma +\frac{|H|^2-B^2}{\Gamma}\right) v\otimes v  -\frac{1}{\Gamma}\, H\otimes H\right\}\\
& +\frac{H_j}{\Gamma}\left\{ \frac{1}{\Gamma^2}
\bigl( v\otimes H + H\otimes v\bigr) -2(v\cdot H)(I-v\otimes v) \right\}
\\
& +\frac{(v\cdot H)}{\Gamma}\left( H\otimes e_j + e_j \otimes H\right) -\frac{B^2}{\Gamma}
\left( v\otimes e_j + e_j \otimes v\right),
\end{split}
\]
and $I$ is the unit matrix. Clearly, $A_{\alpha}$ ($\alpha = \overline{0,3}$) are symmetric matrices. Note also that the last equation in \eqref{8} is just the nonconservative form ${\rm d}S/{\rm d}t=0$ of \eqref{7}, with ${\rm d}/{\rm d}t=\partial_t +(v,\nabla )$.

As is known \cite{An}, natural physical restrictions guaranteeing the hyperbolicity of the RMHD system do not depend on the magnetic field and coincide with corresponding ones in relativistic gas dynamics \cite{Tcpam}. In our case, by direct calculations one can show that the hyperbolicity condition $A_0>0$ holds provided that
\begin{equation}
\rho >0,\quad p_{\rho} >0,\quad 0<c_s^2<1\label{9}
\end{equation}
(of course, by default we also assume that $|v|<1$), where $c_s$ is the relativistic speed of sound, $c_s^2= a^2/h=p_{\rho}/h$. The last inequality in \eqref{9} is the relativistic causality condition.

We will consider the RMHD equations \eqref{1}/\eqref{8} for $t\in [0,T]$ in the whole space $\mathbb{R}^3$. Let
\[
\Gamma (t)=\{ x_1-\varphi (t,x')=0\}
\]
be a smooth hypersurface in $[0,T]\times\mathbb{R}^3$, with $x'=(x_2,x_3)$. Let $\Gamma (t)$ be a surface of strong discontinuity for the conservation laws (\ref{1}).
That is, we are interested in solutions of (\ref{1}) which are smooth in
\[
  \Omega^{\pm}(t)=\{\pm (x_1- \varphi (t,x'))>0\}
\]
whereas $U$ may have a jump on the discontinuity $\Gamma$. As is known, to be weak solutions of the conservation laws \eqref{1} in the whole space $\mathbb{R}^3=\Omega^+(t)\cup\Omega^-(t)$ such piecewise smooth solutions should satisfy standard jump conditions (like Rankine--Hugoniot conditions) at each point of $\Gamma (t)$:
\begin{equation}
 [\mathfrak{j}]=0,\label{10}
\end{equation}
\begin{equation}
 \mathfrak{j}\left[\left(h\Gamma +\frac{|H|^2}{\rho\Gamma}\right) v_n +\frac{(v\cdot H)}{\rho\Gamma}H_n\right]-
H_n\left[ \frac{H_n}{\Gamma^2}+(v\cdot H)v_n\right] +[q]=0,\label{11}
\end{equation}
\begin{equation}
 \mathfrak{j}\left[\left(h\Gamma +\frac{|H|^2}{\rho\Gamma}\right) v_{\tau} +\frac{(v\cdot H)}{\rho\Gamma}H_{\tau}\right]-
H_n\left[ \frac{H_{\tau}}{\Gamma^2}+(v\cdot H)v_{\tau}\right]=0,\label{12}
\end{equation}
\begin{equation}
 [H_n]=0,\label{13}
\end{equation}
\begin{equation}
 \mathfrak{j}\left[ \frac{H_{\tau}}{\rho\Gamma}\right]-H_n[v_{\tau}]=0,\label{14}
\end{equation}
\begin{equation}
\mathfrak{j}\left[ h\Gamma +|H|^2 +\frac{q}{\rho\Gamma}\right]+H_n[v\cdot H]+ [v_nq]=0,\label{15}
\end{equation}
where $[g]=g^+_{|\Gamma}-g^-_{|\Gamma}$ denotes the jump of $g$, with $g^{\pm}:=g$ in $\Omega^{\pm}(t)$,
\[
\mathfrak{j}^{\pm}=\rho^{\pm}\Gamma^{\pm} (v_n^{\pm}-\sigma),\quad v_n^{\pm}={v}^{\pm} \cdot n,\quad \mathfrak{j}:=\mathfrak{j}^{\pm}|_{\Gamma},
\]
\[
n=\frac{1}{\sqrt{1+(\partial_2\varphi )^2+(\partial_3\varphi )^2}}\,(1,-\partial_2\varphi,-\partial_3\varphi ),\quad
\sigma =\frac{\partial_t\varphi}{\sqrt{1+(\partial_2\varphi )^2+(\partial_3\varphi )^2}},
\]
\[
H_n^{\pm}=H^{\pm} \cdot n,\quad H_n:=H_n^{\pm}|_{\Gamma},
\]
\[
{v}^{\pm}_{\tau}=(v^{\pm}_{\tau _1},v^{\pm}_{\tau _2}),\quad
v^{\pm}_{\tau _i}=({v}^{\pm} {\cdot}{\tau}_i),\quad {\tau}_1=(\partial_2\varphi,1,0),\quad
{\tau}_2=(\partial_3\varphi,0,1),
\]
\[
{H}^{\pm}_{\tau}= (H^{\pm}_{\tau _1}, H^{\pm}_{\tau _2}),\quad H^{\pm}_{\tau_i}=({H}^{\pm} {\cdot}{\tau}_i).
\]

As in nonrelativistic MHD \cite{BT,LL,MTT1,MTT2}, a discontinuity with zero mass transfer flux and the magnetic field being nonparallel to it at its points ($\mathfrak{j}=0$ and $H_n\neq 0$) is called {\it contact discontinuity}. Then, for such a discontinuity one has $\sigma =v_n^{\pm}|_{\Gamma}$, i.e., $[v_n]=0$, and it follows from \eqref{14} that $[v_{\tau}]=0$. Since $[v]=0$, from \eqref{12} we get
\begin{equation}
(1-\sigma^2)[H_\tau]=0.
\label{16}
\end{equation}
Since the discontinuity speed $\sigma$ should be less than the speed of light, we have $1-\sigma^2>0$ and \eqref{16} implies $[H_\tau]=0$. In view of \eqref{13}, we thus obtain $[H]=0$. Then, it follows from \eqref{11} that $[p]$=0 and \eqref{15} hold automatically. That is, as in the nonrelativistic case \cite{BT,MTT2}, we obtain the following boundary conditions on a contact discontinuity:
\begin{equation}
[p]=0,\quad [v]=0,\quad [H]=0,\quad \partial_t\varphi=v_N^+|_{\Gamma},
\label{17}
\end{equation}
where $v_N^+ =v_1^+-v_2^+\partial_2\varphi  -v_3^+\partial_3\varphi $. We thus conclude that the contact discontinuity moves with the velocity of fluid particles and
the pressure, the velocity and the magnetic field are continuous whereas the density and the entropy (and also the temperature  $T=e_S(\rho ,S)$) may have a jump.
Here and below it is more convenient to write down the boundary conditions in terms of the vector $v=u/\Gamma =u/\sqrt{1+|u|^2}$ (it is clear that the conditions $[v]=0$  and $[u]=0$ are equivalent).

 As is noted in \cite{Goed}, the boundary conditions like \eqref{17} are most typical in the solar wind and astrophysical plasmas (i.e., in plasmas outside the solar system). Contact discontinuities are usually observed behind astrophysical shock waves bounding supernova remnants or due to the interaction of multiple shock waves driven by fast coronal mass ejections. If characteristic speeds are compatible with the speed of light, then one needs to consider contact discontinuities within the frameworks of relativistic models, in particular, RMHD.

From the mathematical point of view the free boundary problem for a contact discontinuity is the problem of seeking solutions of the systems
\begin{equation}
\label{18}
A_0(U^{\pm} )\partial_tU^{\pm}+\sum_{j=1}^3A_j(U^{\pm} )\partial_jU^{\pm}=0\quad \mbox{for}\ x\in\Omega^{\pm}(t)
\end{equation}
(cf. \eqref{8}) satisfying the boundary conditions \eqref{17} on $\Gamma (t)$ and the initial data
\begin{equation}
U^{\pm} (0,{x})=U_0^{\pm}({x}),\quad {x}\in \Omega^{\pm} (0),\quad \varphi (0,{x}')=\varphi _0({x}'),\quad {x}'\in\mathbb{R}^2
\label{19}
\end{equation}
for $t=0$.

Our main goal is to find conditions on the initial data \eqref{19} providing the existence and uniqueness in Sobolev spaces on some time interval $[0,T]$ of a solution $(U^+,U^-,\varphi )$ to the free boundary problem \eqref{17}--\eqref{19}. These conditions will be additional ones to the hyperbolicity condition \eqref{9} and the requirement $H_n\neq 0$.

Running ahead, we note that to prove the theorem on the existence and uniqueness of a smooth solution to problem \eqref{17}--\eqref{19} we should slightly revise certain places in the proof of the analogous theorem for contact discontinuities in nonrelativistic MHD  \cite{MTT1,MTT2}. In \cite{MTT2} this theorem was managed to be proved only for the 2D version of the problem, more precisely, for a 2D planar flow. Following \cite{MTT1,MTT2}, for RMHD we will also consider such a flow. This means that the flow is $x_3$-invariant, i.e., $U=U(t,x_1,x_2)$, but the velocity and the magnetic field are shearless ($v_3=H_3=0$). For nonrelativistic MHD the requirement   $v_3|_{t=0}=H_3|_{t=0}=0$ on the initial data guarantees that the flow is planar. It is easy to prove that the same is true for RMHD.

Indeed, as is known \cite{Kato,VH}, any symmetric hyperbolic system has a unique solution on a short time interval $[0,T]$. Then, it is natural to assume that system \eqref{1}/\eqref{8} has smooth solutions for $t\in [0,T]$. Omitting technical calculations, we rewrite the fourth and last equations of \eqref{1} for $U=U(t,x_1,x_2)$ as the symmetric hyperbolic system
\[
\begin{pmatrix} \Gamma^2 (\rho h\Gamma^2 +|H|^2) & 0\\ 0& 1\end{pmatrix} \partial_tW+\sum_{j=1}^2
\begin{pmatrix} a_1v_j+a_2H_j& -H_j \\-H_j& v_j\end{pmatrix}\partial_jW +BW=0
\]
for $W=(v_3,H_3)$, where a concrete form of the functions $a_1=a_1(U)$ and $a_2=a_2(U)$ and the matrix $B=B(U,U_t,\partial_1U,\partial_2U)$ is of no interest. The matrix by $\partial_tW$ is positive definite under the hyperbolicity condition \eqref{9}. Hence, the a priori $L^2$ estimate for the solutions $W$ of this system and the assumption $W|_{t=0}=0$ imply $W= 0$ for $t\in [0,T]$. It is thus physically relevant to consider 2D planar flows in RMHD.

Considering problem \eqref{17}--\eqref{19} for a 2D planar flow, without loss of generality, we will below assume that the space variable, the velocity  and the magnetic field have two components: $x=(x_1,x_2)\in \mathbb{R}^2$, $v^{\pm}=(v_1^{\pm},v_2^{\pm})\in \mathbb{R}^2$ and $H^{\pm}=(H_1^{\pm},H_2^{\pm})\in \mathbb{R}^2$. Then
\[
v_{N}^{\pm}={v}_1^{\pm}-v_2^{\pm}\partial_2\varphi,\quad H_{N}^{\pm}=H_1^{\pm}-H_2^{\pm}\partial_2\varphi,\quad
v^{\pm}_{\tau}=v_1^{\pm}\partial_2\varphi +v_2^{\pm},
\]
etc.

We note that for contact discontinuity in nonrelativistic MHD the results in \cite{MTT1} (for the linearized problem) and \cite{MTT2} (for the original nonlinear problem) were obtained for such equations of state for which $[p]=0$ implies $[\rho p_{\rho}]=0$. This is, in particular, true for the equation of state of a {\it polytropic gas}
\begin{equation}
\rho(p,S)= A p^{\frac{1}{\gamma}} e^{-\frac{S}{\gamma}}, \qquad A>0,\quad \gamma>1.
\label{pg}
\end{equation}
Let the relativistic gas be polytropic (exactly the case of a polytropic case was actually considered in \cite{MTT1,MTT2}). We note that in astrophysics this is a usual assumption for a relativistic (or nonrelativistic) gas. For a polytropic gas the first two hyperbolicity conditions in \eqref{9} are reduced to the requirement
\begin{equation}
p>0.
\label{9'}
\end{equation}
The fulfilment of \eqref{9'} guarantees the validity of the inequality $c_s^2>0$. At the same time, the causality condition $c_s^2<1$ is reduced to
\[
1+ \frac{\gamma (2-\gamma )p}{(\gamma -1)\rho} >0.
\]
Under condition \eqref{9'} the last inequality automatically holds for $\gamma \leq 2$ (it is assumed by default that $\gamma >1$, cf. \eqref{pg}). There is no need to assume that the adiabatic index $\gamma \leq 2$, but we will do this for simplicity of references to \cite{MTT1,MTT2}. That is, it will be below assumed by default that $\gamma \leq 2$. Then, as in \cite{MTT1,MTT2}, the hyperbolicity condition is reduced to the sole inequality \eqref{9'}. In astrophysics, it is usually assumed that the gas is monoatamic. This means that  $\gamma =5/3$ for a nonrelativistic gas and $\gamma =4/3$ for a relativistic one. In other words, the condition
 $\gamma \leq 2$ is obviously fulfilled.

\section{Reduced nonlinear problem in a fixed domain}

To reduce our free boundary problem (for the 2D case) to that in a fixed domain we straighten the interface $\Gamma$ by using the same simplest change of independent variables as in \cite{T09,Tcpam}. That is, the unknowns $U^+$ and $U^-$ being smooth in $\Omega^{\pm}(t)$ are replaced by the vector-functions
\begin{equation}
\widetilde{U}^{\pm}(t,x ):= {U}^{\pm}(t,\Phi^{\pm} (t,x),x')
\label{change}
\end{equation}
which are smooth in the half-plain $\mathbb{R}^2_+=\{x_1>0,\ x_2\in \mathbb{R}\}$,
where
\begin{equation}
\Phi^{\pm}(t,x ):= \pm x_1+\Psi^{\pm}(t,x ),\quad \Psi^{\pm}(t,x ):= \chi (\pm x_1)\varphi (t,x_2),
\label{change2}
\end{equation}
and the cut-off $\chi\in C^{\infty}_0(\mathbb{R})$ equals to 1 on $[-1,1]$, and $\|\chi'\|_{L_{\infty}(\mathbb{R})}<1/2$. We use the cut-off function $\chi$ to avoid assumptions about compact support of the initial data in our existence theorem. The change of variable \eqref{change} is admissible if $\partial_1\Phi^{\pm}\neq 0$. The latter is guaranteed, namely, the inequalities $\partial_1\Phi^+> 0$ and $\partial_1\Phi^-< 0$ are fulfilled, if we consider solutions for which
\begin{equation}
\|\varphi\|_{L_{\infty}([0,T]\times\mathbb{R})}\leq 1.
\label{fi}
\end{equation}
This holds if, without loss of generality, we consider the initial data satisfying $\|\varphi_0\|_{L_{\infty}(\mathbb{R})}\leq 1/2$, and the time $T$ in our existence theorem is sufficiently small.

Dropping for convenience tildes in $\widetilde{U}^{\pm}$, we reduce \eqref{17}--\eqref{19} (for the 2D case) to the initial-boundary value problem
\begin{equation}
A_0(U^\pm )\partial_tU^\pm +\widetilde{A}_1(U^\pm ,\Psi^\pm )\partial_1U^\pm +A_2(U^\pm )\partial_2U^\pm =0\quad\mbox{in}\ [0,T]\times \mathbb{R}^2_+,\label{11r}
\end{equation}
\begin{equation}
[p]=0,\quad [v]=0,\quad [H]=0,\quad \partial_t\varphi =v^+_N\quad\mbox{on}\ [0,T]\times\{x_1=0\}\times\mathbb{R},\label{12r}
\end{equation}
\begin{equation}
U^+_{|t=0}=U^+_0,\quad U^-_{|t=0}=U^-_0\quad\mbox{in}\ \mathbb{R}^2_+,
\qquad \varphi |_{t=0}=\varphi_0\quad \mbox{in}\ \mathbb{R},\label{13r}
\end{equation}
where
\[
\widetilde{A}_1(U^{\pm},\Psi^{\pm})=\frac{1}{\partial_1\Phi^{\pm}}\Bigl(
A_1(U^{\pm})-A_0(U^{\pm})\partial_t\Psi^{\pm}-A_2(U^{\pm})\partial_2\Psi^{\pm}\Bigr)
\]
($\partial_1\Phi^{\pm}=\pm 1 +\partial_1\Psi^{\pm}$) and as in (\ref{12r}) we use the notation $[g]:=g^+_{|x_1=0}-g^-_{|x_1=0}$ for any  pair of values $g^+$ and $g^-$.

We are interested in smooth solutions $(U^+,U^-,\varphi )$ of problem \eqref{11r}--\eqref{13r}, to be exact, as in \cite{MTT1,MTT2}, we are going to prove their existence under the fulfillment of the hyperbolicity condition \eqref{9'}, the requirements $|v|<1$ and  $H_n\neq 0$ and the {\it Rayleigh--Taylor sign condition} $[\partial p/\partial n]<0$ at the first moment. In other words, the initial data should satisfy the inequalities
\begin{equation}
p^\pm \geq \bar{p} >0,
\label{5.1}
\end{equation}
\begin{equation}
1-|v^{\pm}| \geq \nu >0,
\label{5.1"}
\end{equation}
\begin{equation}
|H_N^\pm|_{x_1=0}|\geq \kappa >0
\label{mf.1}
\end{equation}
\begin{equation}
[\partial_1p]\geq \epsilon >0,
\label{RT1}
\end{equation}
where $\bar{p}$, $\nu$, $\kappa$ and $\epsilon$ are positive constants, \eqref{RT1} is the Rayleigh--Taylor sign condition written for the ``straightened'' discontinuity (with the equation $x_1=0$). Since the moving domains $\Omega^\pm (t)$ were reduced to the same half-plain $\mathbb{R}^2_+$ (but not to the different
half-plains $\mathbb{R}^2_+$ and $\mathbb{R}^2_-$), the jump of the normal derivative is defined as follows:
\begin{equation}
[\partial_1a]:=\partial_1a^+_{|x_1=0}+\partial_1a^-_{|x_1=0}.
\label{norm_jump}
\end{equation}

Since the domain $\mathbb{R}^2_+$ is unbounded, then the functions belonging to Sobolev spaces $H^s(\mathbb{R}^2_+)$ must vanish at infinity. Therefore, if we want to have smooth solutions belonging to Sobolev spaces, then not $U^+$ and $U^-$ themselves but corresponding functions shifted to some smooth bounded functions $\bar{U}^+$ and $\bar{U}^-$ should belong to $H^s$. Indeed, conditions \eqref{5.1}, \eqref{mf.1} and \eqref{RT1} cannot be satisfied for $U^+$ and $U^-$ vanishing at infinity. As the shifts $\bar{U}^\pm$  one can consider magnetohydrostatic (MHS) equilibria $\bar{U}^\pm (x)$ which are smooth bounded solutions of the stationary RMHD system satisfying condition \eqref{5.1} in the whole half-plain  $\mathbb{R}^2_+$ and conditions \eqref{mf.1} and \eqref{RT1} on its boundary $x_1=0$. For nonrelativistic MHD, if we take gravity into account (that is natural when one deals with the Rayleigh--Taylor sign condition) and introduce in the MHD equations corresponding gravitational lower-order terms, then  one can present rather simple MHS equilibria satisfying conditions \eqref{5.1}, \eqref{mf.1} and \eqref{RT1} when the contact discontinuity is located between two perfectly conducting rigid walls (for the 2D case these walls are the lines $x_1=\pm a$, where $a={\rm const}$, see \cite{MTT2}). Analogous MHS equilibria can be found for RMHD as well.

Since we study the well-posedness of the problem but not the stability of its solutions, the gravitational lower-order (non-differential) terms are of no interest. This is why, as in \cite{MTT2}, for technical simplicity we do not introduce the gravitational terms in the equations and shift the solutions to the mentioned MHS equilibria. Instead of this we consider periodical boundary conditions in the tangential direction. Let
\[
D=\{x\in\mathbb{R}^2\,|\, x_1\in\mathbb{R},\ x_2\in \mathbb{T}\}
\]
be the flow domain of a relativistic gas, where $\mathbb{T}$ is the 1-torus (the unit circle), i.e., the values of a solution should coincide in the ends of the unit segment on the $x_2$-axis. Then
\[
\Gamma (t)=\{x\in \mathbb{R}\times \mathbb{T},\ x_1=\varphi (t,x_2)\},\qquad t\in [0,T].
\]
As above, we now do the change of variables \eqref{change} reducing our free boundary problem to that in the fixed domain
\[
\Omega=\{x_1>0,\ x_2\in\mathbb{T}\}
\]
with the straightened contact discontinuity
\[
\partial\Omega=\{x_1=0,\ x_2\in\mathbb{T}\}.
\]

Since $\Omega$ is still an unbounded domain, for satisfying the hyperbolicity condition \eqref{5.1} as $x_1\rightarrow +\infty$ we make the following usual shift of the unknowns to constants:
\begin{equation}
\breve{U}^\pm =U^\pm -\bar{U}^\pm ,
\label{shift}
\end{equation}
where $\bar{U}^\pm =(\bar{p},0,0,\bar{S}^\pm )$, the constants $\bar{S}^\pm$ are such that $\bar{S}^+\neq\bar{S}^-$, and $\bar{p}$ is the constant from \eqref{5.1}. After the change of unknowns \eqref{shift} the boundary conditions \eqref{12r} stay unchanged whereas
in the RMHD systems \eqref{11r} we should shift the arguments $U^\pm$ of the matrix functions to the constant vectors $\bar{U}^\pm$. Dropping for convenience the breve accents in $\breve{U}^\pm$, we get the following initial-boundary value problem in the space-time domain $[0,T]\times\Omega$:
\begin{equation}
\mathbb{L}(U^+,\Psi^+)=0,\quad \mathbb{L}(U^-,\Psi^-)=0\quad\mbox{in}\ [0,T]\times \Omega,\label{11.1}
\end{equation}
\begin{equation}
\mathbb{B}(U^+,U^-,\varphi )=0\quad\mbox{on}\ [0,T]\times\partial\Omega,\label{12.1}
\end{equation}
\begin{equation}
U^+_{|t=0}=U^+_0,\quad U^-_{|t=0}=U^-_0\quad\mbox{in}\ \Omega,
\qquad \varphi |_{t=0}=\varphi_0\quad \mbox{on}\ \partial\Omega,\label{13.1}
\end{equation}
where $\mathbb{L}(U^\pm,\Psi^\pm)=L(U^\pm,\Psi^\pm)U^\pm$,
\[
L(U^\pm,\Psi^\pm)=A_0(U^\pm +\bar{U}^\pm)\partial_t +\widetilde{A}_1(U^\pm +\bar{U}^\pm,\Psi^\pm)\partial_1+A_2(U^\pm  +\bar{U}^\pm)\partial_2,
\]
and \eqref{12.1} is the compact form of the boundary conditions
\begin{equation}
[p]=0,\quad [v]=0,\quad [H_{\tau}]=0,\quad \partial_t\varphi-v_{N}^+|_{x_1=0}=0.
\label{12'}
\end{equation}
Here and below
\[
{H}^{\pm}_{\tau}= H_1^\pm \partial_2\Psi^{\pm}+H_2^{\pm},\quad H_N^{\pm} = H_1^\pm -H_2^{\pm}\partial_2\Psi^{\pm},\quad
v_N^{\pm} = v_1^\pm -v_2^{\pm}\partial_2\Psi^{\pm}
\]
($v_N^{\pm}|_{x_1=0}= (v_1^\pm -v_2^{\pm}\partial_2\varphi )|_{x_1=0}$, etc.). Moreover, in view of \eqref{shift}, the hyperbolicity conditions for systems \eqref{11.1} can be written, for example, as (cf. \eqref{5.1})
\begin{equation}
p^\pm > -\bar{p}/4
\label{5.1'}
\end{equation}
whereas conditions \eqref{mf.1} and \eqref{RT1} for the new (``shifted'') unknowns stay unchanged.

Note that the continuity of the magnetic field $[H]=0$ is equivalent to $[H_N]=0$ and $[H_\tau ]=0$. Since the Maxwell equations \eqref{5} totally coincide with their nonrelativistic version, following \cite{MTT1}, we make the conclusion that the condition $[H_N]=0$ is not a real boundary condition and must be regarded as a restriction on the initial data \eqref{13.1} (and this is why we did not include it into the boundary conditions \eqref{12'}). More precisely, we have the following proposition.

\begin{proposition}
Let the initial data \eqref{13.1} satisfy
\begin{equation}
{\rm div}\, \mathfrak{h}^+=0,\quad {\rm div}\, \mathfrak{h}^-=0
\label{14.1}	
\end{equation}
and the condition
\begin{equation}
[H_{N}]=0,
\label{15.1}
\end{equation}
where $\mathfrak{h}^{\pm}=(H_{N}^{\pm},H_2^{\pm}\partial_1\Phi^{\pm})$. If problem \eqref{11.1}--\eqref{13.1} has a sufficiently smooth solution, then this solution satisfies \eqref{14.1} and \eqref{15.1} for all $t\in [0,T]$.
\label{p1}
\end{proposition}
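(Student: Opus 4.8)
The plan is to exploit the div--curl structure of the Maxwell equations \eqref{5}, which in the planar setting reduce to the scalar pair $\partial_tH_1=\partial_2w$, $\partial_tH_2=-\partial_1w$ with $w=v_1H_2-v_2H_1$ (the only nonzero component of $v\times H$). Both constraints \eqref{14.1} and \eqref{15.1} should then propagate, because they are respectively an interior divergence and a boundary trace that the transformed Maxwell part of $\mathbb{L}(U^\pm,\Psi^\pm)=0$ merely transports. I would establish \eqref{14.1} first and then use it to close the argument for \eqref{15.1}; this ordering is essential, as explained at the end.

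First I would treat the interior constraint \eqref{14.1}. Writing $y_1=\Phi^\pm(t,x)$, $y_2=x_2$ for the original (physical) position and using $\partial_{y_1}=(\partial_1\Phi^\pm)^{-1}\partial_1$, $\partial_{y_2}=\partial_2-(\partial_2\Psi^\pm/\partial_1\Phi^\pm)\partial_1$, a direct computation (in which the mixed second derivatives of $\Psi^\pm$ cancel) gives the pullback identity
\[
{\rm div}\,\mathfrak{h}^\pm=\partial_1\Phi^\pm\,({\rm div}_y H^\pm),\qquad {\rm div}_y H^\pm=\partial_{y_1}H_1^\pm+\partial_{y_2}H_2^\pm .
\]
Since applying $\partial_{y_1}$ to the first Maxwell equation, $\partial_{y_2}$ to the second, and adding yields $\partial_t|_{y}({\rm div}_yH^\pm)=0$, the quantity $G^\pm:={\rm div}\,\mathfrak{h}^\pm/\partial_1\Phi^\pm$ satisfies, back in the straightened variables, the homogeneous linear transport equation $\partial_tG^\pm-(\partial_t\Psi^\pm/\partial_1\Phi^\pm)\,\partial_1G^\pm=0$. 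As $\partial_1\Phi^\pm\neq0$, the hypothesis ${\rm div}\,\mathfrak{h}^\pm|_{t=0}=0$ means $G^\pm|_{t=0}=0$, and uniqueness for this transport equation forces $G^\pm\equiv0$, i.e. \eqref{14.1} holds for all $t\in[0,T]$.

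Next I would propagate the boundary constraint \eqref{15.1}. Rewriting the two Maxwell equations in the straightened frame and combining them so as to eliminate the normal derivative $\partial_1w$, then evaluating at $x_1=0$ (where $\chi(0)=1$, so $\partial_1\Psi^\pm=\partial_1\partial_2\Psi^\pm=0$ and $\partial_1\Phi^\pm=\pm1$), I expect to reach the boundary evolution law
\[
\partial_tH_N^\pm=\frac{\partial_t\varphi}{\partial_1\Phi^\pm}\,\partial_1H_N^\pm-H_2^\pm\,\partial_t\partial_2\varphi+\partial_2w^\pm\qquad\text{on }x_1=0 .
\]
Subtracting the $-$ equation from the $+$ equation and invoking the boundary conditions $[v]=0$ and $[H_\tau]=0$ from \eqref{12'}, I can solve for the jumps in terms of $[H_N]$, namely $[H_2]=-[H_N]\partial_2\varphi/(1+(\partial_2\varphi)^2)$ and $[w]=-[H_N]v_\tau/(1+(\partial_2\varphi)^2)$, so that every term except the first becomes proportional to $[H_N]$.

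The one genuine obstacle is the surviving term $\partial_t\varphi(\partial_1H_N^++\partial_1H_N^-)$ arising from the jump of the first term above: it is exactly the normal-derivative jump $[\partial_1H_N]$ in the sense of \eqref{norm_jump}, and it is not controlled by $[H_N]$ by itself. This is precisely where \eqref{14.1} must be used. Evaluating ${\rm div}\,\mathfrak{h}^\pm=0$ at $x_1=0$ and using $\partial_2\partial_1\Phi^\pm=0$ there gives $\partial_1H_N^\pm=\mp\partial_2H_2^\pm$, whence $\partial_1H_N^++\partial_1H_N^-=-\partial_2[H_2]$, again proportional to $[H_N]$. With this substitution the jump equation closes into a first-order linear homogeneous transport equation $\partial_t[H_N]+a\,\partial_2[H_N]+b\,[H_N]=0$ on $[0,T]\times\mathbb{T}$, with smooth coefficients $a,b$ built from $\varphi$, $v_\tau$ and their derivatives. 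Since $[H_N]|_{t=0}=0$ by \eqref{15.1}, uniqueness for this scalar transport equation yields $[H_N]\equiv0$ on $[0,T]$, which completes the proof.
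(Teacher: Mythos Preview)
Your argument is correct. The paper does not actually supply a proof of this proposition; it simply states it after remarking that the Maxwell equations \eqref{5} coincide with their nonrelativistic counterparts and referring to \cite{MTT1}. Your approach is precisely the one the paper implicitly relies on, as can be seen from its treatment of the linearized analogues: equation \eqref{xi} is the linearized version of your transport equation for $G^\pm={\rm div}\,\mathfrak{h}^\pm/\partial_1\Phi^\pm$, and equations \eqref{dHN}--\eqref{tHN} encode exactly your boundary evolution law for $H_N^\pm$ together with the closure via ${\rm div}\,\mathfrak{h}^\pm=0$.

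One cosmetic remark: if after invoking ${\rm div}\,\mathfrak{h}^\pm|_{x_1=0}=0$ you also use $w=v_NH_2-v_2H_N$ and the last condition in \eqref{12'}, your boundary evolution law simplifies to $\partial_tH_N^\pm+\partial_2(v_2^\pm H_N^\pm)=0$, whence $\partial_t[H_N]+\partial_2(v_2^+[H_N])=0$; this is the nonlinear counterpart of \eqref{tHN} and makes the coefficients $a=v_2^+$, $b=\partial_2v_2^+$ in your final transport equation explicit. But this is only a simplification, not a correction.
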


As is well known (see, e.g., \cite{BT}), the condition necessary for the well-posedness of a hyperbolic problem like  \eqref{11.1}--\eqref{13.1} is the requirement that the number of the boundary conditions at a point $(t^*,x_2^*)\in [0,T]\times\partial\Omega$ of the boundary should be one unit greater than the number of outgoing (from the boundary $x_1=0$) characteristics of the 1D system
\[
\mathfrak{A}_0 \partial_t\begin{pmatrix} U^+ \\ U^- \end{pmatrix}+\mathfrak{A}_1 \partial_1\begin{pmatrix} U^+ \\ U^- \end{pmatrix}\quad \mbox{for}\ x_1>0,
\]
for which the coefficients of the matrices
\begin{equation}
\mathfrak{A}_0 =\begin{pmatrix} A_0^+ & 0 \\ 0 & A_0^- \end{pmatrix}\quad \mbox{and}\quad
\mathfrak{A}_1= \begin{pmatrix} \widetilde{A}_1^+ & 0 \\ 0 & -\widetilde{A}_1^- \end{pmatrix}
\label{A1}
\end{equation}
are fixed (``frozen'') at the mentioned point $(t^*,x_2^*)$, where
\[
\begin{split}
A_0^{\pm} := & A_0(U^{\pm}+\bar{U}^\pm)|_{x_1=0},\\
\widetilde{A}_1^{\pm} := & \pm\widetilde{A}_1(U^{\pm}+\bar{U}^\pm,\Psi^{\pm})|_{x_1=0} \\ = &\left.\left(A_1(U^{\pm}+\bar{U}^\pm) -A_0(U^{\pm}+\bar{U}^\pm)\partial_t\varphi - A_2(U^{\pm}+\bar{U}^\pm)\partial_2\varphi \right)\right|_{x_1=0}.
\end{split}
\]
This condition is dictated by the general rule of statement of boundary value problems for 1D linear hyperbolic systems and the fact that
one of the boun\-dary conditions (in our case, this is the last condition in \eqref{12'}) should be considered as an equation for finding the function $\varphi (t,x_2)$.

The number of outgoing characteristics of the mentioned system equals to the number of positive eigenvalues of the matrix $(A_0^+)^{-1}\widetilde{A}_1^+$ plus the number of negative eigenvalues of the matrix $(A_0^-)^{-1}\widetilde{A}_1^-$. It is clear that
\begin{equation}
\tilde{\lambda}^{\pm}_i= \lambda^{\pm}_i -\partial_t\varphi,\quad  i=\overline{1,6},
\label{ev}
\end{equation}
where $\tilde{\lambda}^{\pm}_i$ are the eigenvalues of the matrices $(A_0^{\pm})^{-1}\widetilde{A}_1^{\pm}$, and $\lambda^{\pm}_i$ are the eigenvalues of the characteristic matrices $A_N^{\pm}$ of the hyperbolic systems \eqref{11.1} computed for the normal vector $N=(1, -\partial_2\varphi )$, i.e.,
\[
A_N^{\pm}=\left(A_0(U^{\pm}+\bar{U}^\pm)\right)^{-1}\left( A_1(U^{\pm}+\bar{U}^\pm)-A_2(U^{\pm}+\bar{U}^\pm)\partial_2\varphi  \right)|_{x_1=0}.
\]
As in nonrelativistic MHD \cite{BT,LL}, the eigenvalues of the characteristic matrix depend on the Alfv\'{e}n, slow magnetosonic and fast magnetosonic speeds \cite{An,Lich}. However, as in nonrelativistic MHD, for the 2D case under consideration the Alfv\'{e}n speed ``disappears'', i.e., the eigenvalues are defined only through the slow and fast magnetosonic speeds. Namely, the eigenvalues $\lambda^{\pm}_i$,
\[
\lambda_1^-\leq \ldots \leq\lambda_6^-,\quad \lambda_1^+\leq \ldots \leq\lambda_6^+,
\]
has the form
\begin{equation}
\begin{split}
& \lambda_1^{\pm}=v_N^{\pm}|_{x_1=0}-c_{f_-}^{\pm},\quad \lambda_2^{\pm}=v_N^{\pm}|_{x_1=0}-c_{s_-}^{\pm}, \\ & \lambda_3^{\pm}=\lambda_4^{\pm}=v_N^{\pm}|_{x_1=0},\quad
 \lambda_5^{\pm}=v_N^{\pm}|_{x_1=0}+c_{s_+}^{\pm},\quad \lambda_6^{\pm}=v_N^{\pm}|_{x_1=0}+c_{f_+}^{\pm},
\end{split}
\label{24.1}
\end{equation}
where $c_{s_{\pm}}^{\pm}=c_{s_{\pm}}(U_{|x_1=0}^{\pm},\varphi)$, $c_{f_{\pm}}^{\pm}=c_{f_{\pm}}(U_{|x_1=0}^{\pm},\varphi)$,
and $c_{s_{\pm}}\geq 0$ and  $c_{f_{\pm}}\geq 0$ are the slow and fast magnetosonic speeds respectively \cite{An,Lich} (note that in nonrelativistic MHD  $c_{s_-}=c_{s_+}$ and $c_{f_-}=c_{f_+}$, see \cite{BT,LL}).

Unlike the nonrelativistic setting, there are no relatively simple formulae for the slow and fast magnetosonic speeds  because $c_{s_-}$,  $c_{s_+}$, $c_{f_-}$ and $c_{f_+}$ are the roots of a fourth degree polynomial \cite{An,Lich}. However, for us it is only important that the free coefficient of this polynomial equals
$H_N/{\Gamma^3}$ (see \cite{Anton}). In view of assumption \eqref{mf.1}, the free coefficient does not vanish on the boundary, i.e., the polynomial does not have zero roots. Hence, all the speeds  $c_{s_{\pm}}^{\pm}$ and $c_{f_{\pm}}^{\pm}$ are strictly positive. Then, by virtue of the last condition in \eqref{12'}, it follows from \eqref{ev} and \eqref{24.1} that
\begin{equation}
\tilde{\lambda}_1^{\pm}=-c_{f_-}^{\pm},\quad \tilde{\lambda}_2^{\pm}=-c_{s_-}^{\pm},\quad
\tilde{\lambda}_3^{\pm}=\tilde{\lambda}_4^{\pm}=0,\quad \tilde{\lambda}_5^{\pm}=c^{\pm}_{a_+},\quad \tilde{\lambda}_6^{\pm}=c_{f_+}^{\pm}.
\label{lambda}
\end{equation}
That is, the matrix $(\mathfrak{A}_0)^{-1} \mathfrak{A}_1$ has four positive eigenvalues at each point of the boundary $x_1=0$. This means that systems \eqref{11.1} need five boundary conditions that coincides with the number of conditions in \eqref{12.1}. Since the boundary matrix $\mathfrak{A}_1$ has zero eigenvalues, contact discontinuity is {\it characteristic}. It is also important that the rank of this matrix is constant (it equals eight).

The crucial point in the proof of a priori estimates of the linearized problem for contact discontinuity in nonrelativistic MHD \cite{MTT1,MTT2} was the assumption that the basic state about which the linearization is performed satisfies the condition
\begin{equation}
[\partial_1v]=0
\label{1v}
\end{equation}
(we recall that the jump of the normal derivative is defined in \eqref{norm_jump}). It is important that this condition holds for solutions of the nonlinear problem. This is why this assumption is natural and we can estimate an additional error of the Nash--Moser iteration scheme caused by it (the existence of solutions of the nonlinear problem was being proved in \cite{MTT2} by the Nash--Moser method). In this connection, it is important to prove that the solutions of problem
\eqref{11.1}--\eqref{13.1} for RMHD contact discontinuity satisfy condition \eqref{1v} as well. Moreover, one can show that these solutions also satisfy the condition
\begin{equation}
[\partial_1H_N]=0
\label{1H_N}
\end{equation}

\begin{proposition}
Let problem \eqref{11.1}--\eqref{13.1} (with initial data satisfying \eqref{14.1} and \eqref{15.1}) has a sufficiently smooth solution for which conditions \eqref{mf.1} and \eqref{5.1'} are fulfilled. Then this solution satisfies equalities  \eqref{1v} and \eqref{1H_N}.
\label{p2}
\end{proposition}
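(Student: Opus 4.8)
The plan is to obtain both identities by restricting the interior systems \eqref{11.1} to the straightened interface $\{x_1=0\}$ and combining them with the boundary conditions \eqref{12'} and Proposition~\ref{p1}; both are algebraic consequences at each boundary point, so no propagation argument beyond Proposition~\ref{p1} is needed. Throughout I would use that, by \eqref{change2} and $\chi'(0)=0$, on $\{x_1=0\}$ one has $\partial_1\Phi^\pm=\pm1$, $\partial_t\Psi^\pm=\partial_t\varphi$, $\partial_2\Psi^\pm=\partial_2\varphi$, so that the boundary coefficient of $\partial_1U^\pm$ in \eqref{11.1} is $\pm\bigl(A_1-A_0\partial_t\varphi-A_2\partial_2\varphi\bigr)^\pm|_{x_1=0}$. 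Moreover \eqref{12'} together with $[H_N]=0$ from Proposition~\ref{p1} give $[H]=0$, whence the traces of $p,v,H$ and all their tangential ($\partial_t,\partial_2$) derivatives have zero jump; in particular $\Gamma$, $u$, $b$, $|H|^2$, $B^2$ and the matrices $\mathcal{M}$, $\mathcal{N}_j$ (which depend only on $v,H$) are continuous across the interface, while $\rho$, $S$, and hence $\rho h$, may jump.

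For \eqref{1H_N} I would invoke Proposition~\ref{p1} directly: the solution satisfies $\mathrm{div}\,\mathfrak h^\pm=0$ in $[0,T]\times\Omega$, and evaluating this on $x_1=0$ with $\partial_1\Phi^\pm=\pm1$ and $\partial_2\partial_1\Phi^\pm=0$ gives $\partial_1H_N^\pm|_{x_1=0}=\mp\partial_2H_2^\pm|_{x_1=0}$. By the convention \eqref{norm_jump} this is $[\partial_1H_N]=-\partial_2[H_2]$, which vanishes since $[H]=0$ on the whole boundary.

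The identity \eqref{1v} is the substantial part, and I would read it off from two blocks of \eqref{11.1}. In the magnetic (induction) block the coefficients $\mathcal{M},\mathcal{N}_j$ are continuous, and I expect the contact condition $\partial_t\varphi=v_N^+=v_N^-$ from \eqref{12'} to make the coefficient of $\partial_1H^\pm$, which should equal $\mathcal{M}(v_N-\partial_t\varphi)$, vanish on $\{x_1=0\}$. Subtracting the $+$ and $-$ induction blocks then cancels every term with continuous coefficient acting on a continuous tangentially differentiated quantity and leaves $\mathcal{N}_N[\partial_1v]=0$, where $\mathcal{N}_N:=(\mathcal{N}_1-\mathcal{N}_2\partial_2\varphi)|_{x_1=0}$ and the $+$ combination \eqref{norm_jump} arises precisely because the boundary coefficient of $\partial_1U^\pm$ carries the sign $\pm$. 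I expect $\mathcal{N}_N=\frac1\Gamma b\otimes(N-v_N v)-\frac{H_N}{\Gamma^2}I$ to be singular with one-dimensional kernel $\mathrm{span}\,b$, using the identity $(N-v_N v)\cdot b=H_N/\Gamma$; this step therefore only gives $[\partial_1v]\in\mathrm{span}\,b$. To fix the last scalar I would use the continuity (pressure) block, which contains no $H$- or $S$-derivatives. There the contact condition should kill the coefficient of $\partial_1p^\pm$, while the polytropic law \eqref{pg}, for which $[p]=0$ forces $[\rho p_\rho]=0$ and hence $[\,1/(\rho a^2)]=0$, makes the coefficients of $\partial_tp$ and $\partial_2p$ continuous; subtracting the $+$ and $-$ blocks then leaves the single relation $(N-v_N v)\cdot[\partial_1v]=0$. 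Since $(N-v_N v)\cdot b=H_N/\Gamma\neq0$ by \eqref{mf.1}, the covector $N-v_N v$ is not orthogonal to $\ker\mathcal{N}_N=\mathrm{span}\,b$, and the two relations force $[\partial_1v]=0$.

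The main obstacle is the explicit relativistic algebra behind the third paragraph: checking that the $\partial_1H$- and $\partial_1p$-coefficients in the induction and pressure blocks reduce to multiples of $(v_N-\partial_t\varphi)$ and so vanish on the discontinuity, computing $\det\mathcal{N}_N$ and its kernel, and verifying the identity $(N-v_N v)\cdot b=H_N/\Gamma$ (which rests on $b_i=H_i/\Gamma+\Gamma(v\cdot H)v_i$). These are exactly the computations where the relativistic factors $\Gamma,h$ and the intricate matrices $\mathcal{N}_j,\mathcal{A}_j$ make the argument depart from its nonrelativistic analogue in \cite{MTT1,MTT2}; the hypotheses $H_N\neq0$ in \eqref{mf.1} and the polytropic law \eqref{pg} enter precisely to secure, respectively, the non-orthogonality $(N-v_N v)\cdot b\neq0$ and the continuity of the acoustic coefficients that makes the pressure relation clean.
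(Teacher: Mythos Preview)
Your argument is correct and follows essentially the same route as the paper: restrict the first (pressure) equation and the induction block of \eqref{11.1} to $x_1=0$, form the $\pm$ combination that kills all tangential contributions by continuity of $p,v,H$, and invoke $H_N\neq 0$; the divergence constraints of Proposition~\ref{p1} give $[\partial_1 H_N]=0$ in both proofs.

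Two minor differences deserve mention. First, the paper does not analyse $\mathcal{N}_N$: it multiplies the induction block by $\mathcal{M}^{-1}$ and passes to $v$-variables to obtain the physical Maxwell form, which yields directly $H_N^+[\partial_1 v]=[\partial_1 v_N]\,H^+$ on the boundary; combined with $[\partial_1 v_N]=0$ from the pressure equation (the paper reduces $(N-v_Nv)\cdot[\partial_1u]$ to $\Gamma[\partial_1v_N]$), this gives $[\partial_1 v]=0$ without any kernel computation. Your route via $\ker\mathcal{N}_N=\mathrm{span}\,b$ is equivalent but trades the (easy) inversion of $\mathcal{M}$ for the algebra you flag as the ``main obstacle''. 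Second, since the symmetric system \eqref{8} is written in $u$, the relations you actually obtain are $\mathcal{N}_N[\partial_1 u]=0$ and $(N-v_Nv)\cdot[\partial_1 u]=0$, not the versions with $[\partial_1 v]$; the slip is harmless because the linear map $w\mapsto \Gamma(I+u\otimes u)w$ between $[\partial_1 v]$ and $[\partial_1 u]$ is invertible (so $[\partial_1 u]=0\Leftrightarrow[\partial_1 v]=0$, and indeed $[\partial_1 u]\parallel b\Leftrightarrow[\partial_1 v]\parallel H$), and your transversality check $(N-v_Nv)\cdot b=H_N/\Gamma\neq 0$ is exactly what is needed in the $u$-picture.
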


\begin{proof}
The first equation of system \eqref{8} reads (we recall that we consider a polytropic gas)
\[
\frac{\Gamma}{\gamma p}\,\frac{{\rm d} p}{{\rm d} t} +v  \cdot \partial_tu +{\rm div}\, u =0.
\]
After straightening the discontinuity and the change of unknowns \eqref{shift} this equation yields the first equations of systems \eqref{11.1}:
\begin{equation}
\begin{split}
\frac{\Gamma^{\pm}}{\gamma (\bar{p} +p^{\pm})}\,& \partial_t p^{\pm}  +v^{\pm}\cdot \partial_tu^{\pm} \\[6pt]   +\frac{1}{\partial_1\Phi^{\pm}}&\bigg\{ \frac{\Gamma^{\pm}}{\gamma (\bar{p} +p^{\pm})}\,(w^{\pm}\cdot \nabla p^{\pm})   -\partial_t\Psi^{\pm}(v^{\pm}\cdot \partial_1u^{\pm})+{\rm div}\,\tilde{u}^{\pm}\bigg\}=0,
\end{split}
\label{41}
\end{equation}
where
\[
w^{\pm}=(v_N^{\pm}-\partial_t\Psi^{\pm},v_2^{\pm}\partial_1\Phi^{\pm}),\quad \tilde{u}^{\pm} =(u_N^{\pm},u_2^{\pm}\partial_1\Phi^{\pm}),\quad u_N^{\pm}=\Gamma^{\pm}v_N^{\pm}.
\]

Restricting \eqref{41} to the boundary $x_1=0$ and taking into account the last boundary condition in \eqref{12'}, we obtain
\[
\frac{\Gamma^{\pm}}{\gamma (\bar{p} +p^{\pm}) }\,\partial^{\pm}_0\,p^\pm +v^{\pm}\cdot \partial_tu^{\pm} \mp v_N^{\pm}(v^{\pm}\cdot \partial_1u^{\pm}) \pm {\rm div}\,\tilde{u}^\pm=0\quad\mbox{at}\ x_1=0,
\]
where $\partial^{\pm}_0:=\partial_t+v_2^\pm\partial_2 $. Summing up the last equations and using the continuity of the pressure and the velocity (cf. \eqref{12'}), we come to the equality
\[
[\partial_1u_N]- v_N^+ (v^{\pm} \cdot [\partial_1u])=0 \quad\mbox{at}\ x_1=0
\]
which, in view of $\partial_1u^{\pm}=\Gamma ^{\pm}(\partial_1 v^{\pm} + (u^{\pm}\cdot \partial_1v^{\pm})u^{\pm})$, is reduced to $\Gamma^+_{|x_1=0}\,[\partial_1v_N]=0$. We thus get
\begin{equation}
[\partial_1v_N]=0.
\label{vn}
\end{equation}

Instead of the third, fourth and fifth equations of systems \eqref{11.1} we can use their equivalent formulation which is the Maxwell equations \eqref{5} in the domains $\Omega^{\pm}(t)$ written in terms of variables \eqref{change}, \eqref{change2}. More precisely, if we multiply the subsystems of systems \eqref{11.1} containing the third, fourth and fifth equations from the left by the nonsingular matrices ${\cal M}^{-1}(U^{\pm})$ (the matrix ${\cal M}$ was written after \eqref{8}) and then pass from the unknowns $u^{\pm}$ to $v^{\pm}$, then we just obtain the mentioned equivalent equations. Passing in them to a jump, we obtain
\[
[\partial_0 H-H_N\partial_1 v  -H_2\partial_2 v+(\partial_1v_N)H]=0,
\]
i.e., by virtue of \eqref{12'} and \eqref{15.1}, one gets
\[
H^+_N[\partial_1 v]=[\partial_1v_N]H^+\quad\mbox{at}\ x_1=0.
\]
In view of equalities \eqref{vn} and conditions \eqref{mf.1}, we get \eqref{1v}. At last, restricting the sum of divergences \eqref{14.1} to the boundary, we obtain equality \eqref{1H_N}.
\end{proof}

\section{Main result and discussion}

We are now in a position to state the main result of the present paper that is the local-in-time existence and uniqueness theorem for problem \eqref{11.1}--\eqref{13.1}. Clearly, this theorem implies a corresponding theorem for the original free boundary problem  \eqref{17}--\eqref{19}.

\begin{theorem} Let $m\in\mathbb{N}$ and $m\geq 6$. Suppose the initial data \eqref{13.1}, with
\[
\left((U^+_0,U^-_0),\varphi_0\right)\in H^{m+17/2}(\Omega)\times H^{m+17/2}(\partial\Omega)
\]
satisfy the hyperbolicity condition \eqref{5.1'}, requirement  \eqref{5.1"}  and the divergence constraints \eqref{14} for all $x\in\Omega$. Let the initial data also satisfy requirement \eqref{mf.1}, the Rayleigh--Taylor sign condition \eqref{RT1} and the boundary constraint \eqref{15} for all $x\in \partial\Omega$. Assume also that the initial data are compatible up to order $m+8$ in the sense analogous to that in \cite{MTT2}. Then there exists a sufficiently short time $T>0$ such that problem \eqref{11.1}--\eqref{13.1} has a unique solution
\[
\left((U^+,U^-),\varphi\right)\in H^{m}([0,T]\times\Omega)\times H^{m}([0,T]\times\partial\Omega).
\]
\label{t1}
\end{theorem}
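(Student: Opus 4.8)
The plan is to follow closely the strategy developed for contact discontinuities in nonrelativistic MHD in \cite{MTT1,MTT2}, adapting it to the relativistic matrices $A_0,A_j$ at the handful of places where the two settings genuinely differ. Since the front $x_1=0$ is characteristic (the boundary matrix $\mathfrak{A}_1$ has the four zero eigenvalues recorded in \eqref{lambda} and constant rank $8$), the linearized energy estimate will lose control of the normal derivative and, more seriously, will exhibit a fixed loss of derivatives; this forces the use of a Nash--Moser iteration rather than a straightforward fixed-point argument.

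First I would construct, from the compatible initial data, an approximate solution $(U^{a,\pm},\varphi^a)$ for which the interior operators in \eqref{11.1} and the boundary operator \eqref{12.1} vanish to high order at $t=0$; this is where the compatibility conditions up to order $m+8$ and the extra smoothness $H^{m+17/2}$ of the data enter. One then seeks the solution as a perturbation of $(U^{a,\pm},\varphi^a)$ and reduces \eqref{11.1}--\eqref{13.1} to a problem with zero initial data and a small, high-order-vanishing right-hand side.

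The core of the argument is the a priori estimate for the problem linearized about a basic state $(\hat U^\pm,\hat\varphi)$ satisfying the hyperbolicity condition \eqref{5.1'}, the requirement \eqref{mf.1}, the Rayleigh--Taylor sign condition \eqref{RT1}, and the structural constraints \eqref{1v}--\eqref{1H_N}. By Proposition~\ref{p2} these last two hold exactly for genuine solutions, so imposing them on the basic state is natural; the error they introduce for the Nash--Moser iterates must be tracked, exactly as in \cite{MTT2}. Working in the anisotropic (conormal) weighted Sobolev spaces adapted to a characteristic boundary, I would run the symmetric-hyperbolic energy method to control tangential and time derivatives, recover normal derivatives through the equations using the constant rank of $\mathfrak{A}_1$, and---this is the decisive point---use \eqref{RT1} to produce the positive boundary term that bounds the free-boundary function $\varphi$. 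The relativistic input needed here is only that the eigenvalue picture \eqref{24.1} is reproduced: the slow and fast magnetosonic speeds are the roots of a quartic whose free coefficient $H_N/\Gamma^3$ does not vanish under \eqref{mf.1}, so that, as in the nonrelativistic case, there are precisely four outgoing characteristics and five boundary conditions \eqref{12'}. The resulting estimate will be \emph{tame}, i.e.\ linear in the highest norm of the basic state with a fixed derivative loss.

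Finally I would feed this tame estimate into the Nash--Moser scheme: introduce smoothing operators on the scale $H^s$, verify the quadratic bound for the difference between the nonlinear operators and their linearizations, and iterate to convergence in $H^m$ for $T$ sufficiently small; the gains $17/2$ and $m+8$ in the hypotheses are precisely what is consumed by the fixed loss propagated through the iteration, and uniqueness follows by applying the linearized estimate to the difference of two solutions. I expect the main obstacle to be closing the linearized energy estimate on the characteristic boundary---reconciling the Rayleigh--Taylor boundary term with the relativistic symmetric matrices $A_0,A_2,\widetilde{A}_1$ and with the coupling of the two half-space problems through \eqref{12'}---since every other step is structurally identical to \cite{MTT1,MTT2} and requires only the algebraic verifications noted above.
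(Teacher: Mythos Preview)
Your outline is correct and matches the paper's strategy: approximate solution from compatible data, reduction to zero initial data, tame linearized estimate about a basic state satisfying \eqref{a5}--\eqref{jc1'} and \eqref{RTL}, then Nash--Moser iteration as in \cite{MTT2}. Two points deserve correction. First, the paper works in ordinary $H^s$ spaces, not conormal ones; normal derivatives of the noncharacteristic components are recovered algebraically from the interior system, while those of the characteristic components $(\dot H_N^\pm,\dot S^\pm)$ are controlled via the transport equations \eqref{S} and the equations \eqref{xi} for the linearized divergences. Second, your statement that ``the relativistic input needed here is only that the eigenvalue picture is reproduced'' understates the one genuinely new obstacle: when one computes the boundary quadratic form $(\widehat{\mathfrak A}_1\dot U\cdot\dot U)|_{x_1=0}$ via the decomposition $A_j=v_jA_0+G_j$, a term proportional to $[\dot H_N]$ appears (see \eqref{qf1}) that has no nonrelativistic counterpart. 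To eliminate it, the paper modifies the standard reduction to homogeneous boundary conditions so that the shifted unknown satisfies $[\dot H_N^\natural]=0$ (via the auxiliary transport equation \eqref{tHN} for $g_6$); this costs one extra derivative of the interior source $f$ in the basic estimate \eqref{main_estL}, a loss which is harmless for the Nash--Moser bookkeeping and leaves the final regularity count in Theorem~\ref{t1} unchanged from \cite{MTT2}.
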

In the theorem above we use the standard notation $H^m$ for the Sobolev spaces $W_2^m$, which are Hilbert spaces.

The boundary conditions \eqref{12'} coincide with those for the contact discontinuity in nonrelativistic MHD  \cite{MTT1,MTT2}. As was noted in  \cite{MTT1},
for such boundary conditions the symbol associated with the free boundary is not elliptic. This means that the boundary conditions are not resolvable for $\nabla_{t,x}\varphi=(\partial_t\varphi ,\partial_2\varphi )$. This implies that the corresponding constant coefficient linearized problem does not satisfy the uniform Lopatinski condition, i.e., the Lopatinski conditions holds only in a weak sense (see, e.g., \cite{BT}). This also implies a so-called loss of derivatives from the initial data and the source terms in a priori estimates of solutions of the linearized problem (for both constant and variables coefficients). In such cases, often but not always the existence theorem for the nonlinear problem is proved by Nash--Moser iterations as, for example, in \cite{MTT2}. Then the initial data are assumed to be smother than the solutions (cf. Theorem \ref{t1}).

It is worth noting that, as in nonrelativistic MHD, the Rayleigh--Taylor sign condition \eqref{RT1} does not contain the magnetic field, unlike, for example, the plasma-vacuum interface problem for which such a condition is written in \cite{Tjde} for the total pressure $q$ (in nonrelativistic MHD $q=p+|H|^2/2$). That is, for contact discontinuities the Rayleigh--Taylor sign condition appears in its classical (hydrodynamical) form.

It is natural expect that the fulfillment of the Rayleigh--Taylor sign condition at the first moment guarantees the local existence of the contact discontinuity in the general 3D case. However, the proof of the corresponding existence and uniqueness theorem is still an open problem even for the technically simpler nonrelativistic case (see discussion in \cite{MTT1}).

The natural question is whether the Rayleigh--Taylor sign condition (besides other assumptions on the initial data in Theorem \ref{t1} and the analogous theorem in \cite{MTT2}) is not only sufficient but also necessary for the well-posedness of the free boundary problem. As was noted in \cite{Tcpaa}, Rayleigh--Taylor-type instability, in particular, means the Hadamard-type ill-posedness of the corresponding frozen coefficient linearized problem (see also the interesting survey \cite{JosSaut}). Formally, there is no general result saying that such kind of ill-posedness implies also the ill-posedness of the original nonlinear free boundary problem. However, we know, at least, two examples of problems for which there is a rigorous proof that Rayleigh--Taylor instability implies the ill-posedness of the nonlinear problem. These are the free boundary problems for an ideal incompressible \cite{Ebin} and compressible liquid \cite{GuoTice}.

For the contact discontinuity, due to big technical difficulties the question of constructing the Hadamard-type ill-posedness example for the frozen coefficient linearized problem under the violation of the Rayleigh--Taylor sign condition is still an open problem even for nonrelativistic MHD. Technical difficulties are caused, first off all, by the fact that the magnetic field on the the contact discontinuity is nowhere parallel to its surface. For instance, for the plasma-vacuum interface problem, for which the magnetic field on the boundary is just parallel to it, an ill-posedness example was managed to be constructed in \cite{Tcpaa} for the corresponding frozen coefficient linearized problem under the simultaneous failure of the Rayleigh--Taylor sign condition for the total pressure $q$ and the noncollinearity condition for the plasma and vacuum magnetic fields on the boundary (see \cite{ST,Tjde,Tcpaa}).

Regarding strong discontinuities in RMHD, as in nonrelativistic MHD \cite{BT,LL}, besides contact discontinuities under consideration there are shock waves ($\mathfrak{j}\neq 0$, $[\rho ]\neq 0$, see \eqref{10}--\eqref{15}), current-vortex sheets ($\mathfrak{j}= 0$, $H_n=0$) and Alfv\'{e}n discontinuities ($\mathfrak{j}\neq 0$, $[\rho ]= 0$). For shock waves in RMHD, unlike the nonrelativistic case (see  \cite{BT,T}), there is only the result in  \cite{T_qam} where the domains of uniform stability, neutral stability and violent instability were found for the fast parallel shock waves. For current-vortex sheets in RMHD, using the ideas of \cite{T05,T09}, a sufficient ``stability'' condition whose fulfillment at each point of the initial discontinuity (together with compatibility conditions, etc.) implies a local-in-time existence and uniqueness theorem was found in \cite{FT}. Returning to contact discontinuities in RMHD, we note that Theorem \ref{t1} is totally analogous to the existence and uniqueness theorem in \cite{MTT2} for the nonrelativistic case.

\section{Basic a priori estimate for the linearized \\ problem}

For the proof of Theorem  \ref{t1} for the nonlinear problem  \eqref{11.1}--\eqref{13.1} by Nash--Moser iterations (see, e.g., \cite{MTT2,ST,T09,Tcpam}) it is necessary to perform a detailed analysis of the corresponding variable coefficient linearized problem. The linearized problem for the RMHD contact discontinuity has no principal differences from the corresponding problem in the nonrelativistic case. However, we have to discuss some peculiarities of the relativistic case for certain points in the proof of well-posedness for the  linearized problem. This is why for readers' convenience we write down here this linearized problem.

Let
\[
\begin{split}
& \Omega_T:= (-\infty, T]\times\Omega,\quad \partial\Omega_T:=(-\infty ,T]\times\partial\Omega ,\\
 & \Omega_T^+:= [0, T]\times\Omega,\quad \partial\Omega_T^+:=[0 ,T]\times\partial\Omega .
\end{split}
\]
We consider the {\it basic state}
\begin{equation}
(\widehat{U}^+(t,x ),\widehat{U}^-(t,x ),\hat{\varphi}(t,{x}'))
\label{a21}
\end{equation}
upon which we perform linearization. Let
\[
\widehat{U}^{\pm}=(\hat{p}^{\pm},\hat{u}^{\pm},\widehat{H}^{\pm},\widehat{S}^{\pm})
\]
and $\hat{\varphi}$ be given sufficiently smooth functions, with
\begin{equation}
\|\widehat{U}^+\|_{W^2_{\infty}(\Omega_T)}+
\|\widehat{U}^-\|_{W^2_{\infty}(\Omega_T)}
+\|\hat{\varphi}\|_{W^2_{\infty}(\partial\Omega_T)} \leq K,
\label{a22}
\end{equation}
where $K>0$ is a constant. We will use the notations
\[
\widehat{\Phi}^{\pm}(t,x )=\pm x_1 +\widehat{\Psi}^{\pm}(t,x ),\quad
\widehat{\Psi}^{\pm}(t,x )=\chi(\pm x_1)\hat{\varphi}(t,x').
\]
That is, all the ``hat'' values are determined like corresponding values for $(U^+,U^-, \varphi)$, for example,
\[
\hat{v}_{N}^{\pm}=\hat{v}_1^{\pm}- \hat{v}_2^{\pm}\partial_2\widehat{\Psi}^{\pm},\quad
\widehat{u}^{\pm}=\widehat{\Gamma}^{\pm}\hat{v}^{\pm},\quad \widehat{\Gamma}^{\pm}=\sqrt{1+|\hat{u}^{\pm}|^2},\quad
\widehat{H}^{\pm}_{\tau}= \widehat H_1^\pm \partial_2\widehat\Psi^{\pm}+\widehat H^{\pm}_2.
\]
Moreover, without loss of generality we assume that $\|\hat{\varphi}\|_{L_{\infty}(\partial\Omega_T)}<1$ (cf. \eqref{fi}). This implies $\partial_1\widehat{\Phi}^+\geq 1/2$ and $\partial_1\widehat{\Phi}^-\leq -  1/2$.

We assume that the basic state satisfies the ``relaxed'' hyperbolicity condition \eqref{5.1'},
\begin{equation}
\hat{p}^{\pm} \geq -\bar{p}/2\quad \mbox{in}\ \Omega_T^+,  \label{a5}
\end{equation}
the ``relaxed'' requirement \eqref{5.1"} that the speed of light is unattainable,
\begin{equation}
1-|\hat{v}^{\pm}|\geq \nu /2 >0\quad \mbox{in}\ \Omega_T^+,
\label{ls}
\end{equation}
the boundary conditions \eqref{12'} together with the boundary constraint \eqref{15},
\begin{equation}
[\hat{p}]=0,\quad [\hat{v}]=0,\quad [\widehat{H}]=0, \quad \partial_t\hat{\varphi}=\hat{v}_{N}^+|_{x_1=0}\quad \mbox{on}\ \partial\Omega_T,
\label{a12'}
\end{equation}
the ``relaxed'' requirement \eqref{mf.1},
\begin{equation}
|\widehat{H}_N^{\pm}|_{x_1=0}|\geq {\kappa}/2 >0\quad \mbox{on}\ \partial\Omega_T^+,
\label{cdass}
\end{equation}
and the jump conditions \eqref{1v} and \eqref{1H_N},
\begin{equation}
[\partial_1\hat{v}]=0,\quad [\partial_1\widehat{H}_N]=0\quad\mbox{on}\ \partial\Omega_T.
\label{jc1'}
\end{equation}

The linearization of equations  \eqref{11.1} and \eqref{12.1} about the basic state \eqref{a21} reads:
\[
\mathbb{L}'(\widehat{U}^{\pm},\widehat{\Psi}^{\pm})(\delta U^{\pm},\delta\Psi^{\pm}):=
\frac{d}{d\varepsilon}\mathbb{L}(U_{\varepsilon}^{\pm},\Psi_{\varepsilon}^{\pm})|_{\varepsilon =0}={f}^{\pm}
\quad \mbox{in}\ \Omega_T,
\]
\[
\mathbb{B}'(\widehat{U}^+,\widehat{U}^-,\hat{\varphi})(\delta U^+,\delta U^-,\delta \varphi):=
\frac{d}{d\varepsilon}\mathbb{B}(U_{\varepsilon}^+,U_{\varepsilon}^-,\varphi_{\varepsilon})|_{\varepsilon =0}={g}
\quad \mbox{on}\ \partial\Omega_T
\]
where $U_{\varepsilon}^{\pm}=\widehat{U}^{\pm}+ \varepsilon\,\delta U^{\pm}$,
$\varphi_{\varepsilon}=\hat{\varphi}+ \varepsilon\,\delta \varphi$ and
\[
\Psi_{\varepsilon}^{\pm}(t,{x} ):=\chi (\pm x_1)\varphi _{\varepsilon}(t,{x}'),\quad
\Phi_{\varepsilon}^{\pm}(t,{x} ):=\pm x_1+\Psi_{\varepsilon}^{\pm}(t,{x} ),
\]
\[
\delta\Psi^{\pm}(t,{x} ):=\chi (\pm x_1)\delta \varphi (t,{x} ).
\]
Here we introduce the source terms
\[
{f}^{\pm}(t,{x} )=(f_1^{\pm}(t,{x} ),\ldots ,f_6^{\pm}(t,{x} ))\quad \mbox{and}\quad {g}(t,{x}' )=(g_1(t,{x}' ),\ldots ,g_5(t,{x}' ))
\]
to make the interior equations and the boundary conditions inhomogeneous.

We easily compute the exact form of the linearized equations (below we drop $\delta$ in the notations of perturbations):
\[
\begin{split}
\mathbb{L}'& (\widehat{{U}}^{\pm}, \widehat{\Psi}^{\pm})({U}^{\pm},\Psi^{\pm}) \\ &
=
L (\widehat{{U}}^{\pm},\widehat{\Psi}^{\pm}){U}^{\pm} +{\cal C}(\widehat{{U}}^{\pm},\widehat{\Psi}^{\pm})
{U}^{\pm}  -  \bigl\{L(\widehat{{U}}^{\pm},\widehat{\Psi}^{\pm})\Psi^{\pm}\bigr\}\frac{\partial_1\widehat{U}{\pm}}{\partial_1\widehat{\Phi}^{\pm}},
\end{split}
\]
\[
\mathbb{B}'(\widehat{{U}}^+,\widehat{{U}}^-,\hat{\varphi})({U}^+,{U}^-,\varphi)=
\left(
\begin{array}{c}
p^+-p^-\\[3pt]
v^+-v^-\\[3pt]
H_{\tau}^+-H_{\tau}^-\\[3pt]
\partial_t\varphi +\hat{v}_2^+\partial_2\varphi  -v_{N}^+
\end{array}
\right),
\]
where $v_{N}^{\pm}=v_1^{\pm}-v_2^{\pm}\partial_2\widehat{\Psi}^\pm$, ${H}^{\pm}_{\tau}= H_1^\pm \partial_{2}\widehat{\Psi}^{\pm}+H^{\pm}_{2}$ and the matrix
${\cal C}(\widehat{{U}}^{\pm},\widehat{\Psi}^{\pm})$ is determined as
\[
\begin{split}
{\cal C}(\widehat{{U}}^{\pm},\widehat{\Psi}^{\pm}){Y}
= (&{Y} , \nabla_yA_0(\widehat{{U}}^{\pm} +\bar{U}^{\pm}))\partial_t\widehat{{U}}^{\pm}
+({Y} ,\nabla_y\widetilde{A}_1(\widehat{U}^{\pm}+\bar{U}^{\pm},\widehat{\Psi}^{\pm}))\partial_1\widehat{{U}}^{\pm}\\ &
+ ({Y} ,\nabla_yA_2(\widehat{{U}}^{\pm} +\bar{U}^{\pm}))\partial_2\widehat{{U}}^{\pm},
\end{split}
\]
\[
({Y} ,\nabla_y A(\widehat{{U}}^{\pm}+\bar{U}^{\pm})):=\sum_{i=1}^6y_i\left.\left(\frac{\partial A ({Y} )}{
\partial y_i}\right|_{{Y} =\widehat{{U}}^{\pm}+\bar{U}^{\pm}}\right),\quad {Y} =(y_1,\ldots ,y_6).
\]
We also note that $v^{\pm}$ are expressed through $u^{\pm}$ as follows:
\begin{equation}
v^{\pm}=\frac{1}{\widehat{\Gamma}^{\pm}}\left(u^{\pm}-(\hat{v}^{\pm}\cdot u^{\pm})\hat{v}^{\pm}\right).
\label{vu}
\end{equation}
We recall that $\delta$ was dropped in the notations of perturbations, i.e., here $v^{\pm}:=\delta v^{\pm}$  and $u^{\pm}:=\delta u^{\pm}$.

The differential operator $\mathbb{L}'(\widehat{U}^{\pm},\widehat{\Psi}^{\pm})$ is a first order operator in
$\Psi^{\pm}$. Following \cite{Al}, we overcome this potential difficulty by introducing the ``good unknown''
\begin{equation}
\dot{U}^\pm:=U^\pm -\frac{\Psi^\pm}{\partial_1\widehat{\Phi}^\pm}\,\partial_1\widehat{U}^\pm
\label{b23}
\end{equation}
Omitting detailed calculations, we rewrite the linearized interior equations in terms of $\dot{U}^+$ and $\dot{U}^-$:
\begin{equation}
L(\widehat{U}^{\pm},\widehat{\Psi}^{\pm})\dot{U}^{\pm} +{\cal C}(\widehat{U}^{\pm},\widehat{\Psi}^{\pm})
\dot{U}^{\pm} - \frac{\Psi^{\pm}}{\partial_1\widehat{\Phi}^{\pm}}\,\partial_1\bigl\{\mathbb{L}
(\widehat{U}^{\pm},\widehat{\Psi}^{\pm})\bigr\}={f}^{\pm}.
\label{b24}
\end{equation}
As in \cite{Al,MTT1,MTT2,ST,T09,Tcpam}, we drop the zeroth-order terms in $\Psi^+$  and $\Psi^-$ in \eqref{b24} and consider the effective linear operators
\[
\begin{split}
\mathbb{L}'_e(\widehat{U}^{\pm},\widehat{\Psi}^{\pm})\dot{U}^{\pm} :=& L(\widehat{U}^{\pm},\widehat{\Psi}^{\pm}) \dot{U}^{\pm}+\mathcal{C}(\widehat{U}^{\pm},\widehat{\Psi}^{\pm})\dot{U}^{\pm}
\\
= & A_0(\widehat{U}^{\pm}+\bar{U}^{\pm})\partial_t\dot{U}^{\pm}
+\widetilde{A}_1(\widehat{U}^{\pm}+\bar{U}^{\pm},\widehat{\Psi}^{\pm})
\partial_1\dot{U}^{\pm}\\
 &+A_2(\widehat{U}^{\pm}+\bar{U}^{\pm})\partial_2\dot{U}^{\pm}+\mathcal{C}(\widehat{U}^{\pm},\widehat{\Psi}^{\pm})\dot{U}^{\pm}.
\end{split}
\]
In the subsequent nonlinear analysis the dropped terms in \eqref{b24} should be considered as additional error terms of the Nash--Moser iterations step.

Regarding the boundary differential operator $\mathbb{B}'$, in terms of unknowns \eqref{b23} it reads
\[
\mathbb{B}'_e(\widehat{U},\hat{\varphi})(\dot{U},\varphi ):=
\mathbb{B}'(\widehat{U},\hat{\varphi})(U^+,U^-,\varphi )=
\left(
\begin{array}{c}
\dot{p}^+-\dot{p}^- + \varphi [\partial_1\hat{p}]\\[3pt]
\dot{v}_1^+-\dot{v}_1^-\\[3pt]
\dot{v}_2^+-\dot{v}_2^-\\[3pt]
\dot{H}_{\tau}^+-\dot{H}_{\tau}^-+ \varphi [\partial_1\widehat{H}_{\tau}]\\[3pt]
\partial_t\varphi +\hat{v}_2^+\partial_2\varphi -\dot{v}_{N}^+ - \varphi \partial_1\hat{v}_N^+
\end{array}
\right),
\]
where
\[
\widehat{U}=(\widehat{U}^+,\widehat{U}^-),\quad \dot{U}=(\dot{U}^+,\dot{U}^-),
\quad \dot{v}_{N}^{\pm}=
\dot{v}_1^{\pm}-\dot{v}_2^{\pm}\partial_2\widehat{\Psi}^{\pm}, \quad
\dot{H}_{\tau}^{\pm}=\dot{H}_1^{\pm}\partial_2\widehat{\Psi}^{\pm} +\dot{H}_2^{\pm}.
\]
We used here the important condition $[\partial_1\hat{v}]=0$. Note that \eqref{vu} and \eqref{b23} imply
\begin{equation}
\dot{v}^{\pm}=\frac{1}{\widehat{\Gamma}^{\pm}}\left(\dot{u}^{\pm}-(\hat{v}^{\pm}\cdot \dot{u}^{\pm})\hat{v}^{\pm}\right).
\label{vu'}
\end{equation}

Introducing the notation
\[
\mathbb{L}'_e(\widehat{U},{\widehat{\Psi}})\dot{U}:=
\left(
\begin{array}{c}
\mathbb{L}'_e(\widehat{U}^+,\widehat{\Psi}^+)\dot{U}^+\\[3pt]
\mathbb{L}'_e(\widehat{U}^-,\widehat{\Psi}^-)\dot{U}^-
\end{array}
\right),
\]
with $\widehat{\Psi}=(\widehat{\Psi}^+,\widehat{\Psi}^-)$, we write down the linear problem for $(\dot{U},\varphi)$:
\begin{align}
\mathbb{L}'_e(\widehat{U},\widehat{{\Psi}})\dot{U}={f}\quad &\mbox{in}\ \Omega_T, \label{28}\\[3pt]
\mathbb{B}'_e(\widehat{U},\hat{\varphi})(\dot{U},\varphi)={g}\quad &\mbox{on}\ \partial\Omega_T,\label{29}\\[3pt]
(\dot{U},\varphi)=0\quad &\mbox{for}\ t<0,\label{30}
\end{align}
where $f=(f^+,f^-)$. Here we suppose $f$ and $g$ vanish in the past (for $t<0$) and consider the case of zero initial data. The case of nonzero initial data can be postponed to the nonlinear analysis (construction of a so-called approximate solution; see, e.g., \cite{Tcpam}).

\begin{theorem}
Let assumptions \eqref{a22}--\eqref{jc1'} be fulfilled for the basic state \eqref{a21}. Let also the basic state satisfies  the Rayleigh--Taylor sign condition
\begin{equation}
[\partial_1\hat{p} ]\geq \epsilon /2 >0\quad \mbox{on}\ \partial\Omega_T^+,
\label{RTL}
\end{equation}
where $[\partial_1\hat{p} ]=\partial_1\hat{p}^+_{|x_1=0} +\partial_1\hat{p}^-_{|x_1=0}$ (see \eqref{norm_jump}). Then, for all $f \in H^2(\Omega_T)$ and $g\in H^{2}(\partial\Omega_T)$ which vanish in the past, problem \eqref{28}--\eqref{30} has a unique solution $(\dot{U},\varphi )\in H^1(\Omega_T)\times H^1(\partial\Omega_T)$. Moreover, this solution obeys the a priori estimate
\begin{equation}
\|\dot{U} \|_{H^{1}(\Omega_T)}+\|\varphi\|_{H^1(\partial\Omega_T)} \leq C\left\{\|f \|_{H^{2}(\Omega_T)}+ \|g\|_{H^{2}(\partial\Omega_T)}\right\},
\label{main_estL}
\end{equation}
where $C=C(K,\bar{p},{\kappa},\epsilon,T)>0$ is a constant independent of the data $f$ and $g$.
\label{t1L}
\end{theorem}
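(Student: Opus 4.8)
The plan is to follow the scheme developed for the nonrelativistic MHD contact discontinuity in \cite{MTT1,MTT2}, treating the relativistic terms as a perturbation of that structure, since the symmetric matrices $A_\alpha$ and the boundary conditions \eqref{12'} have the same qualitative form in both settings. I would separate the argument into the a priori estimate \eqref{main_estL}, which is the substantive part, and the existence statement, which I would deduce from the estimate. Existence follows from \eqref{main_estL} together with an analogous estimate for the adjoint of \eqref{28}--\eqref{29} by a standard duality argument: one first obtains a weak solution, and its regularity $\dot U\in H^1$, $\varphi\in H^1$ is then read off from the estimate applied to that solution. Throughout, positive definiteness of $A_0$ is guaranteed by the relaxed hyperbolicity and subluminality assumptions \eqref{a5}, \eqref{ls} in the polytropic case, and the constancy of the rank (equal to eight) of the boundary matrix is what keeps the estimate in ordinary Sobolev spaces.

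For the estimate the main tool is the energy method based on the symmetry of $A_0$, $\widetilde A_1$, $A_2$. I would take the $L^2(\Omega_T)$ inner product of $\mathbb{L}'_e(\widehat U^\pm,\widehat\Psi^\pm)\dot U^\pm=f^\pm$ with $\dot U^\pm$ and integrate by parts in $(t,x)$. Symmetry turns the interior terms into the positive quantity $\tfrac12\int_{\{t=T\}}(A_0\dot U^\pm,\dot U^\pm)\,dx$ plus lower-order contributions carrying derivatives of the coefficients and the zeroth-order matrix $\mathcal C$; by \eqref{a22} these are bounded by $K\|\dot U\|_{L^2}^2$ and absorbed through Gronwall's inequality, while the source produces $\int(f^\pm,\dot U^\pm)$, handled by Young's inequality. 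The delicate object is the boundary integral over $\partial\Omega_T$, the quadratic form $(\widetilde A_1^\pm\dot U^\pm,\dot U^\pm)|_{x_1=0}$ summed over the two sides.

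The heart of the proof, and the step I expect to be the main obstacle, is showing that this boundary integral is, modulo controlled terms, bounded below and in fact dominates $\|\varphi\|^2$. Here I would insert the boundary conditions encoded in $\mathbb{B}'_e$: continuity $\dot v^+=\dot v^-$ and $\dot H_\tau^+=\dot H_\tau^-$ (up to $g$), the pressure relation $\dot p^+-\dot p^-=-\varphi[\partial_1\hat p]+g_1$, and the kinematic relation $\dot v_N^+=\partial_t\varphi+\hat v_2^+\partial_2\varphi-\varphi\,\partial_1\hat v_N^++g_5$. Substituting these into the leading $\dot p$--$\dot v_N$ coupling of the boundary form collapses it, up to lower-order and source terms, into $\tfrac12\int[\partial_1\hat p]\,\partial_t(\varphi^2)$, whose integration in time yields $\tfrac12\int_{\{t=T\}}[\partial_1\hat p]\,\varphi^2$. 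By the Rayleigh--Taylor sign condition \eqref{RTL} this is at least $\tfrac{\epsilon}{4}\|\varphi(T)\|_{L^2}^2$, so it moves to the left-hand side and supplies control of the front, the residual boundary terms (carrying $\partial_t[\partial_1\hat p]$ and similar factors) being lower order and absorbed. This is exactly the place where a wrong sign of $[\partial_1\hat p]$ would produce Hadamard ill-posedness, so verifying it rigorously---keeping careful track of all cross terms in the relativistic boundary form---is the crux. The relativistic features enter only mildly: through the relation \eqref{vu'} between $\dot v^\pm$ and $\dot u^\pm$, and through the fact that the magnetosonic speeds remain strictly positive, which follows from \eqref{cdass} since the free coefficient of the characteristic quartic equals $H_N/\Gamma^3$; this preserves the block structure of $\widetilde A_1^\pm$ and the count of boundary conditions used above.

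To reach the full $H^1$ estimate I would differentiate \eqref{28}--\eqref{29} in the tangential variables $t$ and $x_2$ and re-run the energy argument for $\partial_t\dot U$ and $\partial_2\dot U$; the commutators are controlled by the data, and the Rayleigh--Taylor mechanism now bounds $\partial_t\varphi$ and $\partial_2\varphi$, yielding $\|\varphi\|_{H^1(\partial\Omega_T)}$ and the tangential part of $\|\dot U\|_{H^1}$. The normal derivative $\partial_1\dot U$ is then recovered algebraically from the interior equation: since $\widetilde A_1^\pm$ has constant rank, the normal derivatives of the noncharacteristic components are expressed directly through $f^\pm$ and the already-estimated tangential derivatives, while the normal derivatives of the two characteristic components are obtained from the divergence constraints \eqref{14.1} (which control $\partial_1 H_N$) and the transport form of the entropy equation, exactly as in \cite{MTT1,MTT2}. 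The overall loss of one derivative from the $H^2$ data to the $H^1$ solution reflects the failure of the uniform Lopatinski condition noted after \eqref{12'} and is precisely what the subsequent Nash--Moser iteration is designed to accommodate.
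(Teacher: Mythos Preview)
Your sketch follows the general energy-method template of \cite{MTT1,MTT2}, but it misses the one genuinely relativistic feature that drives the statement of the theorem, namely the extra derivative lost on $f$. When you compute the boundary quadratic form $(\widehat{\mathfrak A}_1\dot U,\dot U)|_{x_1=0}$ in RMHD, it does \emph{not} reduce to the nonrelativistic expression: besides the $\dot v_N^+[\dot p]$ and $[\dot H_\tau]$ couplings you anticipate, there is an additional term proportional to $[\dot H_N]$ (equation \eqref{qf1}). The jump $[\dot H_N]$ is not among the boundary conditions \eqref{29}, so it cannot be replaced by a known function of $\varphi$ and $g$; left untreated, this term wrecks the energy identity. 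The paper handles it by modifying the reduction to homogeneous boundary conditions: the shift $\widetilde U$ is chosen so that $[\widetilde H_N]=g_6$, where $g_6$ solves the boundary transport equation \eqref{tHN} with source $[f_H\cdot\widehat N]$, which forces $[\dot H_N^{\natural}]=0$ for the shifted unknown. Because $g_6$ is built from the trace of $f$, estimating $\widetilde U$ in $H^2$ costs $\|f\|_{H^2(\Omega_T)}$ rather than $\|f\|_{H^1}$, and this is precisely why \eqref{main_estL} needs $f\in H^2$. Your assertion that ``relativistic features enter only mildly'' overlooks this mechanism, and your outline gives no account of where the second derivative of $f$ is spent.

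A second, smaller gap concerns the placement and handling of the Rayleigh--Taylor term. You invoke \eqref{RTL} already at the $L^2$ level to control $\|\varphi\|_{L^2}$ from the $\dot v_N^+[\dot p]$ coupling; the paper instead obtains the $L^2$/material-derivative estimate \eqref{mat} \emph{without} \eqref{RTL} and uses the sign condition only in the $\partial_2$-differentiated identity, where it controls $\|\partial_2\varphi\|_{L^2}$. More importantly, the boundary form also contains the contribution $(\hat H_N^+\dot v_2^+-\hat H_2^+\dot v_N^+)[\dot H_\tau]$, which after substituting the boundary conditions couples $\varphi$ (or $\partial_2\varphi$) to the \emph{trace} of $\dot v$ and is not ``lower order'' in any obvious sense. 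In the paper this surfaces as the term ${\rm coeff}\,\partial_0\dot H_N^+\,\partial_2\varphi$ in \eqref{Q}; its estimation---using the linearized Maxwell equation \eqref{dHN} to rewrite $R$, then passing to a volume integral and integrating by parts---is explicitly flagged as the most delicate step, and you should not sweep it into the residual $P$. Finally, existence in the paper is obtained by a noncharacteristic strictly dissipative regularization rather than by duality; either route is standard, but duality would oblige you to run the same estimate for the adjoint problem, which here is not automatic.
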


Referring the reader to the detailed proof of the corresponding theorem in \cite{MTT1} for nonrelativistic MHD, we briefly comment here the proof of Theorem \ref{t1L} by focusing on places which need special attention in the context of the relativistic case. We will also refer to \cite{MTT2} where a priori estimates of the linearized problem were being deduced in higher-order norms, but the process of deriving these estimates differs a little from that of getting the basic $H^1$ a priori estimate in \cite{MTT1}.

We note that in \cite{MTT1} in the right-hand side of the counterpart of the a priori estimate \eqref{main_estL} there appear the norms $\|f \|_{H^{1}(\Omega_T)}$ and $\|g\|_{H^{3/2}(\partial\Omega_T)}$. First, we have especially roughened estimate \eqref{main_estL} by writing $\|g\|_{H^{2}(\partial\Omega_T)}$ instead of the norm $\|g\|_{H^{3/2}(\partial\Omega_T)}$ because the gain of one half derivative is not used in the proof of the existence theorem for the nonlinear problem. Second, regarding the source term  $f$, the peculiarity of our relativistic case is such that estimate \eqref{main_estL} is worse a little bit in the sense of a bigger requirements on $f$. However, this does not influence on the final result of Theorem \ref{t1} for the nonlinear problem. The regularity of the initial data and solutions in Theorem \ref{t1} is the same as that in the analogous theorem in \cite{MTT2}. As was noted in \cite{Tcpam}, minor restrictions on the smoothness of the source term $f$ in comparison with $g$ play no role in the proof of convergence of the Nash--Moser iterations. That is, one finally uses an a priori estimate with the same index of Sobolev space for  $f$ and $g$.

By standard argument we first reduce the boundary conditions to homogeneous ones (see \cite{MTT1,MTT2}). However, for our relativistic case we have to modify a little bit the standard procedure. First, as in \cite{MTT1,MTT2}, we introduce a vector-function  $\widetilde{U}=(\widetilde{U}^+,\widetilde{U}^-)$ vanishing in the past and satisfying the boundary conditions \eqref{29} for $\varphi =0$. Then, the new unknown
\begin{equation}
\dot{U}^{\natural}=(U^{+\natural},U^{-\natural})=\dot{U}-\widetilde{U}, \label{a87'}
\end{equation}
for which we have the estimate
\begin{equation}
\|\widetilde{U} \|_{H^{s+1}(\Omega_T)}\leq C\|g \|_{H^{s+1/2}(\partial\Omega_T)}
\label{tildU}
\end{equation}
(in this section $s=0$ or $s=1$), satisfies problem \eqref{28}--\eqref{30} with $f=\tilde{f} =(\tilde{f}^+,\tilde{f}^-)$, where
\begin{equation}
\tilde{f}^\pm =f^\pm-\mathbb{L}'_e(\widehat{U}^{\pm},\widehat{\Psi}^{\pm})\widetilde{U}^{\pm}.
\label{a87''}
\end{equation}
Here and below $C$ is a positive constant that can change from line to line, and it does not depend on the source terms but may depend on other constants, in particular, in \eqref{a22} it depends on the constant $K$.

Choosing $\widetilde{U}$ such that it satisfies the boundary conditions \eqref{29}, we have the requirement $[\widetilde{H}_{\tau}]=g_4$ only for the tangential components of the vectors $\widetilde{H}^{\pm}$ whereas for their normal components $\widetilde{H}_N^{\pm}$ we still have some freedom. We require that the normal components satisfy the condition $\big[\widetilde{H}_N\big]=g_6$, where  $g_6$ is a solution of the equation
\begin{equation}
\label{tHN}
\partial_tg_6 + \partial_2(\hat{v}_2^+g_6) =\big[f_H\cdot \widehat{N}\big]\quad \mbox{on}\ \partial\Omega_T
\end{equation}
(we below discuss why exactly this equation was taken), where  $\widehat{N}^{\pm}=(1,-\partial_2\widehat{\Psi}^{\pm})$, and ${f}^{\pm}_{H}=({f}^{1\pm}_{H},{f}^{2\pm}_{H})$ are right-hand parts in the equations for $\dot{H}^{\pm}$ following from \eqref{28} (we below write down these equations $\partial_t\dot{H}^{\pm}+\ldots = {f}^{\pm}_{H}$ up to lower-order terms). The right-hand parts ${f}^{\pm}_{H}$ being determined through $f^{\pm}$ and the basic state \eqref{a21} can be written down if necessary, but their concrete form is of no interest. Instead of \eqref{tildU} we now have the estimate
\begin{equation}
\|\widetilde{U} \|_{H^{s+1}(\Omega_T)}\leq C\|(g, g_6) \|_{H^{s+1/2}(\partial\Omega_T)}
\label{tildU"}
\end{equation}
(note that $\|\widetilde{H}_N^{\pm}\|_{H^{s+1}(\Omega_T)}\leq C\|g_6 \|_{H^{s+1/2}(\partial\Omega_T)}$).

Using the trace theorem for the functions ${f}^{\pm}_{H}$, from \eqref{tHN} we deduce the estimate
\[
\|g_6 \|_{H^{3/2}(\partial\Omega_T)}\leq C\|f |_{x_1=0}\|_{H^{3/2}(\partial\Omega_T)}\leq C\|f \|_{H^{2}(\Omega_T)}.
\]
Then, for $\widetilde{H}_N^{\pm}$ we have:
\[
\|\widetilde{H}_N^{\pm}\|_{H^{2}(\Omega_T)}\leq C\|g_6 \|_{H^{3/2}(\partial\Omega_T)}\leq C\|f \|_{H^{2}(\Omega_T)}.
\]
In view of \eqref{tildU"} (for $s=1$), we finally obtain the following estimate for $\widetilde{U}$:
\begin{equation}
\|\widetilde{U} \|_{H^{2}(\Omega_T)}\leq C\left(\|g \|_{H^{3/2}(\partial\Omega_T)} + \|f \|_{H^{2}(\Omega_T)}\right).
\label{tildU'}
\end{equation}
Clearly, by virtue of \eqref{tildU'}, one gets the estimate
\begin{equation}\label{tf}
\|\tilde{f} \|_{H^{1}(\Omega_T)}\leq C\left(\|g \|_{H^{2}(\partial\Omega_T)} + \|f \|_{H^{2}(\Omega_T)}\right)
\end{equation}
for the new right-hand part $\tilde{f}$ (see \eqref{a87''}). We have roughened this estimate by writing $\|g \|_{H^{2}(\partial\Omega_T)}$ instead of the norm $\|g \|_{H^{3/2}(\partial\Omega_T)}$.

Regarding equation \eqref{tHN}, it was shown in \cite{MTT1} that the jump $\big[\dot{H}_N\big]$ satisfied exactly this equation (for $g=0$). Then, after the shift of unknowns  \eqref{a87'} we get the equation
\[
\partial_t\big[\dot{H}_N^{\natural}\big] + \partial_2\big(\hat{v}_2^+\big[\dot{H}_N^{\natural}\big]\big) =0\quad \mbox{on}\ \partial\Omega_T
\]
implying $\big[\dot{H}_N^{\natural}\big]=0$ for zero initial data.

Dropping ${\natural}$ and tildes after the shift of unknowns \eqref{a87'}, we obtain problem \eqref{28}--\eqref{30} with $g=0$ whose solutions satisfy the additional condition
\begin{equation}\label{HN"}
\big[\dot{H}_N\big]=0
\end{equation}
on the boundary. For this problem with homogeneous boundary conditions and the additional condition \eqref{HN"} we should deduce the estimate
\begin{equation}
\|\dot{U} \|_{H^{1}(\Omega_T)}+\|\varphi\|_{H^1(\partial\Omega_T)} \leq C\|f \|_{H^{1}(\Omega_T)}.
\label{main_estL'}
\end{equation}
Then, taking into account inequalities \eqref{tildU'} and \eqref{tf},  from \eqref{main_estL'} we  get the desired a priori estimate \eqref{main_estL}.

For deriving the a priori estimate \eqref{main_estL'}, as in \cite{MTT1}, we first get the auxiliary estimate
\begin{equation}\label{d1U}
\|\partial_1\dot{U}(t)\|^2_{L^2(\Omega)}\leq C\bigg\{\|f\|^2_{H^1(\Omega_T)} +\nt \dot{U}(t)\nt^2_{{\rm tan},1}+ \int\limits_0^t\nt\dot{U}(s)\nt^2_{H^1(\Omega)}{\rm d}s \bigg\},
\end{equation}
where
\[
\begin{split}
& \nt \dot{U}(t)\nt^2_{{\rm tan},1}:=\|\dot{U}(t)\|^2_{L^2(\Omega)}+\|\partial_t\dot{U}(t)\|^2_{L^2(\Omega)}+\|\partial_2\dot{U}(t)\|^2_{L^2(\Omega)},\\
& \nt\dot{U}(s)\nt^2_{H^1(\Omega)}:=\nt \dot{U}(t)\nt^2_{{\rm tan},1}+\|\partial_1\dot{U}(t)\|^2_{L^2(\Omega)}.
\end{split}
\]
Here and below auxiliary inequalities are called estimates (they are not estimates in the usual sense). For getting estimate \eqref{d1U} we should use assumption \eqref{cdass} and take into account the structure of the boundary matrix $\mathfrak{A}_1$ (see \eqref{A1}) written down for the basic state \eqref{a21}. Unlike the much more simple nonrelativistic case, it is not easy to write down the boundary matrix explicitly, as was done in \cite{MTT1,MTT2}. But, fortunately we do not need to do this.

As a matter of fact, the boundary matrices $\pm\widetilde{A}_1(\widehat{U}^{\pm},\widehat{\Psi}^{\pm})|_{x_1=0}$ have two zero eigenvalues (see \eqref{lambda}). We first have to understand what unknowns (we will call them characteristic) correspond to these zero eigenvalues. First of all, $\dot{S}^{\pm}$ are such characteristic unknowns (exactly as in \cite{MTT1,MTT2}). Indeed, since the last equation in \eqref{8} is ${\rm d}S/{\rm d}t=0$, its linearization, in view of the preliminary change of unknowns \eqref{change}, \eqref{change2} and the subsequent change of unknowns \eqref{b23}, yields the sixth and eleventh equations of system \eqref{28}:
\begin{equation}\label{S}
\partial_t\dot{S}^{\pm}+\frac{1}{\partial_1\widehat{\Phi}^{\pm}}(\hat{w}^{\pm}\cdot\nabla\dot{S}^{\pm})+\mbox{l.o.t.}=f_6^{\pm},
\end{equation}
where $\hat{w}^{\pm}$ are the vectors  $w^{\pm}$ appearing in \eqref{41} and written down for the basic state  \eqref{a21}. Here and below l.o.t. denotes lower-order terms whose concrete form is of no interest. In view of the last assumption in \eqref{a12'}, we have $\hat{w}^{\pm}_1|_{x_1=0}=0$. That is, $\dot{S}^{\pm}$ are indeed characteristic unknowns.

Regarding two more characteristic unknowns, as in nonrelativistic MHD \cite{MTT1,MTT2}, they are $\dot{H}_N^{\pm}=
\dot{H}_1^{\pm}-\dot{H}_2^{\pm}\partial_2\widehat{\Psi}^{\pm}$.  Indeed, taking into account the preliminary change of unknowns \eqref{change}, \eqref{change2} and the subsequent change of unknowns \eqref{b23}, the linearization of system \eqref{5} gives the equations
\begin{equation}
\partial_t\dot{H}^{\pm}+\frac{1}{\partial_1\widehat{\Phi}^{\pm}}\left\{ (\hat{w}^{\pm} \cdot\nabla )
\dot{H}^{\pm} - (\hat{\mathfrak{h}}^{\pm} \cdot\nabla ) \dot{v}^{\pm} + \widehat{H}^{\pm}{\rm div}\,\tilde{\dot{v}}^{\pm}
\right\} +\mbox{l.o.t.}={f}^{\pm}_{H}
\label{aA3}
\end{equation}
which follow from \eqref{28}, where $\tilde{\dot{v}}^{\pm} =(\dot{v}_N^{\pm},\dot{v}_2^{\pm}\partial_1\widehat{\Phi}^{\pm})$ (see also \eqref{vu'}). At the same time, it follows from \eqref{aA3} that
\begin{equation}
\partial_t\dot{H}_N^{\pm}+\frac{1}{\partial_1\widehat{\Phi}^{\pm}}( \hat{w}^{\pm} \cdot\nabla \dot{H}_N^{\pm}) - \widehat{H}_2^{\pm} \partial_2\dot{v}_N^{\pm} +\widehat{H}_N^{\pm}\partial_2\dot{v}_2^{\pm} +\mbox{l.o.t.}=({f}^{\pm}_{H}\cdot \widehat{N}^{\pm}).
\label{dHN}
\end{equation}
In view of $\hat{w}^{\pm}_1|_{x_1=0}=0$, we are not able to resolve the last equations for $\partial_1\dot{H}_N^{\pm}$ at each point of $\Omega_T$, i.e., $\dot{H}_N^{\pm}$ are indeed characteristic unknowns.

The unknowns $\dot{p}^{\pm}$, $\dot{u}^{\pm}$ and $\dot{H}_2^{\pm}$ (or $\dot{H}_{\tau}^{\pm}$) are thus noncharacteristic.
It follows from \eqref{cdass} that the speeds $c_{s_{\pm}}^{\pm}$ and $c_{f_{\pm}}^{\pm}$ computed on the basic state are strictly positive (see \eqref{lambda}).
In view of the smoothness assumption for the basic state \eqref{a21}, we can resolve system  \eqref{28} for the $x_1$--derivatives of these noncharacteristic unknowns in a small neighbourhood of the boundary $x_1=0$. Following then standard arguments from \cite{MTT1,MTT2}, we finally get the estimate
\begin{equation}\label{d1Un}
\|\partial_1\dot{U}_n(t)\|^2_{L^2(\Omega)}\leq C\bigg\{\|f\|^2_{H^1(\Omega_T)} +\nt \dot{U}(t)\nt^2_{{\rm tan},1}+ \int\limits_0^t\nt\dot{U}(s)\nt^2_{H^1(\Omega)}{\rm d}s \bigg\}
\end{equation}
in the whole domain $\Omega$, where  $\dot{U}_n=(\dot{U}_n^+,\dot{U}_n^-)$ and $\dot{U}_n^{\pm}=(\dot{p}^{\pm}, \dot{u}^{\pm},\dot{H}_2^{\pm})$.

Following then \cite{MTT1,MTT2}, for estimating the $x_1$--derivatives of the noncharacteristic unknowns $\dot{H}_N^{\pm}$ and $\dot{S}^{\pm}$ we use equations \eqref{S} as well as the equations for the linearized divergences of the magnetic fields
\[
\xi^{\pm} ={\rm div}\,\dot{\mathfrak{h}}^{\pm}= \partial_1\dot{H}_N^{\pm} +\partial_2(\dot{H}_2^{\pm}\partial_1\widehat{\Phi}^{\pm})
\]
obtained by applying the divergence operator to the systems for $\dot{\mathfrak{h}}^{\pm}$, which are implied by systems \eqref{aA3}:
\begin{equation}\label{xi}
\partial_t \left(\frac{\xi^{\pm}}{\partial_1\widehat{\Phi}^{\pm}}\right)+ \frac{1}{\partial_1\widehat{\Phi}^{\pm}} \left(\hat{w}^{\pm} \cdot\nabla \left(\frac{\xi^{\pm}}{\partial_1\widehat{\Phi}^{\pm}}\right)\right) + \mbox{l.o.t}=\frac{{\rm div}\, f_{\mathfrak{h}}^{\pm}}{\partial_1\widehat{\Phi}^{\pm}},
\end{equation}
where ${f}_{\mathfrak{h}}^{\pm}=(f_H^{1\pm}-f_H^{2\pm}\partial_2\widehat{\Psi}^{\pm} ,f_H^{2\pm}\partial_1\widehat{\Phi}^{\pm})$. Both equations
\eqref{S} and equations \eqref{xi} do not need boundary conditions on $x_1=0$ because $\hat{w}_1^{\pm}|_{x_1=0}=0$. Omitting then standard arguments of the energy method, we finally deduce the estimate
\begin{equation}
\label{d1U'}
\|\partial_1\dot{U}_c(t)\|^2_{L^2(\Omega)}\leq C\bigg\{\|f\|^2_{H^1(\Omega_T)} +\nt \dot{U}(t)\nt^2_{{\rm tan},1}+ \int\limits_0^t\nt\dot{U}(s)\nt^2_{H^1(\Omega)}{\rm d}s \bigg\},
\end{equation}
where $\dot{U}_c=(\dot{H}_N^+,\dot{H}_N^-,\dot{S}^+,\dot{S}^-)$. Estimates \eqref{d1Un} and \eqref{d1U'} yield \eqref{d1U}.

As was already noted above, for the contact discontinuity the symbol associated with free boundary is not elliptic. For the linear problem \eqref{28}--\eqref{30} this means that from the boundary conditions \eqref{29} we are not able to express the time-space gradient $(\partial_t\varphi ,\partial_2\varphi )$ of the front through the trace $\dot{U}_{|x_1=0}$. However, as in the nonrelativistic case \cite{MTT1,MTT2}, the last boundary condition is resolvable for the ``material'' derivative $\partial_0\varphi =\partial_t\varphi +\hat{v}_2^+\partial_2\varphi$ (since $\hat{w}_1^+|_{x_1=0}=0$, the derivative $\partial_0$ coincides on the boundary with the material derivative $\partial_t+(\hat{w}^+\cdot\nabla )$ in the reference frame moving with the discontinuity). Then, repeating the arguments from \cite{MTT1,MTT2}, we can obtain the auxiliary estimate
\begin{equation}\label{mat}
\begin{split}
\|\dot{U}(t)\|^2_{L^2(\Omega)}+ & \|\partial_0\dot{U}(t)\|^2_{L^2(\Omega)}+\|\varphi(t)\|^2_{L^2(\Omega)} \\ & \leq \varepsilon C\nt\dot{U}(t)\nt^2_{H^1(\Omega)} +\frac{C}{\varepsilon}\bigg\{ \|f\|^2_{H^1(\Omega_T)}+\int\limits_0^tI(s){\rm d}s \bigg\},
\end{split}
\end{equation}
where $\varepsilon >0$ is an arbitrary constant (here $C$ depends on  $\varepsilon$) and
\[
I(t)=\nt\dot{U}(t)\nt^2_{H^1(\Omega)}+
\|\varphi(t)\|^2_{L^2(\Omega)}+\|\partial_t\varphi(t)\|^2_{L^2(\Omega)}+\|\partial_2\varphi(t)\|^2_{L^2(\Omega)}.
\]

It is clear that having in hand estimates for $\partial_0\dot{U}$ and $\partial_2\dot{U}$ we can derive a corresponding estimate for $\partial_t\dot{U}$. That is,
it remains to get estimates for $\partial_2\dot{U}$ as well as estimates for $\partial_t\varphi$ and $\partial_2\varphi $. Combining these estimates with estimates
 \eqref{d1U} and \eqref{mat} (for a suitable choice of a sufficiently small $\varepsilon$) could give us the energy inequality
\begin{equation}\label{ener}
\begin{split}
I(t) \leq \bigg\{ \|f\|^2_{H^1(\Omega_T)}+\int\limits_0^tI(s){\rm d}s \bigg\}.
\end{split}
\end{equation}
Applying then Gronwall's lemma, this inequality implies the a priori estimate \eqref{main_estL'}. Returning to inhomogeneous boundary conditions, we obtain the a priori estimate \eqref{main_estL}.

The derivation of the auxiliary estimates \eqref{d1U} and \eqref{mat} is purely technical and does not need some new ideas for the problem for contact discontinuity. While deriving  \eqref{d1U} and \eqref{mat} we also do not use the principal Rayleigh--Taylor condition \eqref{RTL}. Obtaining the estimate for $\partial_2\dot{U}$ (in passing we get an estimate for $\partial_2\varphi $), on the contrary, plays the crucial role for deriving the energy inequality  \eqref{ener}.

The derivation of the estimate for $\partial_2\dot{U}$ has no principal differences from that in the nonrelativistic case \cite{MTT1,MTT2}. For understanding the possibility to adapt the arguments from  \cite{MTT1,MTT2} to the relativistic problem, we have only to pay attention to the quadratic form with the boundary matrix $\mathfrak{A}_1$ (see \eqref{A1}) computed on the basic state \eqref{a21}. Let $\widehat{\mathfrak{A}}_1:=\mathfrak{A}_1|_{U^{\pm}=\widehat{U}^{\pm},\;\varphi =\hat{\varphi}}$. Then
\begin{equation}\label{qf}
\big(\widehat{\mathfrak{A}}_1\dot{U}\cdot\dot{U}\big)\big|_{x_1=0} = \big[\big(\{A_1(\widehat{U}+\bar{U}) -A_0(\widehat{U}+\bar{U})\partial_t\varphi  - A_2(\widehat{U}+\bar{U})\partial_2\varphi  \}\dot{U}\cdot\dot{U}\big)\big],
\end{equation}
where the jump $\big[(\cdot ) (\widehat{U},\bar{U},\dot{U})\big] = (\cdot ) (\widehat{U}^+,\bar{U}^+,\dot{U}^+)|_{x_1=0} - (\cdot )(\widehat{U}^-,\bar{U}^-,\dot{U}^-)|_{x_1=0}$.

For calculating the quadratic form  \eqref{qf} we should understand the structure of the matrix
\[
A_1({U}) -A_0({U})\partial_t\varphi  - A_2({U})\partial_2\varphi \quad \mbox{for}\quad \partial_t\varphi =v_N:=v_1-v_2\partial_2\varphi .
\]
To this end one needs to apply the following decomposition for the matrices in system  \eqref{8}:
\[
A_j=v_jA_0+G_j
\]
(recall that we consider the 2D case, i.e.,  $j=1,2$), where
\[
G_j=
\begin{pmatrix}
0 & e_j^{\sf T}-v_jv^{\sf T} &0 &0\\[6pt]
e_j-v_jv & \mathcal{G}_j &{\mathcal{N}_j}^{\sf T} &0\\
0& \mathcal{N}_j & 0 & 0 \\
0& 0 & 0 &0
\end{pmatrix},
\]
\begin{multline*}
\mathcal{G}_j= v_j\left\{ 2\frac{B^2}{\Gamma}\,v\otimes v -\frac{(v\cdot H)}{\Gamma}
\bigl( v\otimes H + H\otimes v\bigr)\right\}\\
+\frac{H_j}{\Gamma}\left\{ \frac{1}{\Gamma^2}
\bigl( v\otimes H + H\otimes v\bigr) -2(v\cdot H)(I-v\otimes v) \right\}
\\
+\frac{(v\cdot H)}{\Gamma}\left( H\otimes e_j + e_j \otimes H\right) -\frac{B^2}{\Gamma}
\left( v\otimes e_j + e_j \otimes v\right).
\end{multline*}
Then, for $\partial_t\varphi =v_N$ we have
\[
A_1({U}) -A_0({U})\partial_t\varphi  - A_2({U})\partial_2\varphi = G_N(U,\varphi ): = G_1({U}) - G_2({U})\partial_2\varphi .
\]
It is easy to see that
\[
G_N=
\begin{pmatrix}
0 & N^{\sf T}-v_Nv^{\sf T} &0 & 0\\[6pt]
N-v_Nv & \mathcal{G}_N &{\mathcal{N}_N}^{\sf T} & 0\\
0& \mathcal{N}_N & 0 & 0\\
0& 0 & 0 &0
\end{pmatrix},
\]
where $N=(1,-\partial_2\varphi )$,
\[
\mathcal{N}_N= \frac{1}{\Gamma}\,b\otimes N -\frac{v_N}{\Gamma}\, b\otimes v - \frac{H_N}{\Gamma^2}\,I,\quad H_N=H_1-H_2\partial_2\varphi
\]
and
\begin{multline*}
\mathcal{G}_N= v_N\left\{ 2\frac{B^2}{\Gamma}\,v\otimes v -\frac{(v\cdot H)}{\Gamma}
\bigl( v\otimes H + H\otimes v\bigr)\right\}\\
+\frac{H_N}{\Gamma}\left\{ \frac{1}{\Gamma^2}
\bigl( v\otimes H + H\otimes v\bigr) -2(v\cdot H)(I-v\otimes v) \right\}
\\
+\frac{(v\cdot H)}{\Gamma}\left( H\otimes N + N \otimes H\right) -\frac{B^2}{\Gamma}
\left( v\otimes N + N \otimes v\right).
\end{multline*}

By virtue of $\partial_t\hat{\varphi}=\hat{v}_{N}^{\pm}|_{x_1=0}$ (see \eqref{a12'}), one gets
\[
\big(\widehat{\mathfrak{A}}_1\dot{U}\cdot\dot{U}\big)\big|_{x_1=0} =\big[\big(G_N(\widehat{U},\hat{\varphi} )\dot{U}\cdot\dot{U}\big)\big].
\]
Since $[\hat{v}]=[\widehat{H}]=0$ and $[\dot{u}]=0$ (see \eqref{a12'}, \eqref{vu'} and \eqref{29} for $g=0$), we have
\[
\begin{split}
\frac{1}{2}\big[\big(G_N(\widehat{U},\varphi )\dot{U}\cdot\dot{U}\big)\big] &=\big[ \dot{p}\dot{u}_N -\hat{v}_N(\hat{v}\cdot \dot{u})\dot{p}+(\widehat{\mathcal{N}}_N\dot{u}\cdot\dot{H})\big],\\
& =\big[\widehat{\Gamma}\dot{p}\dot{v}_N + \dot{v}_N (\hat{b}\cdot\dot{H})-(\widehat{H}_N/\widehat{\Gamma}^2)(\dot{u}\cdot\dot{H})\big],
\end{split}
\]
where $\dot{u}_{N}^{\pm}=\dot{u}_1^{\pm}-\dot{u}_2^{\pm}\partial_2\widehat{\Psi}^{\pm}$ and $\widehat{\mathcal{N}}_N^{\pm}|_{x_1=0}=\mathcal{N}_N(\widehat{U}_{|x_1=0}^{\pm},\hat{\varphi })$. Omitting simple algebra and taking into account the boundary condition $[\dot{v}]=0$ (see \eqref{29} for $g=0$) and assumptions \eqref{a12'}, we finally obtain
\begin{equation}\label{qf1}
\begin{split}
-\frac{1}{2}\big( & \widehat{\mathfrak{A}}_1\dot{U}\cdot  \dot{U}\big)\big|_{x_1=0}  \\ & =\widehat{\Gamma}^+\big\{
\big( \widehat{H}_N^+\dot{v}_2^+-\widehat{H}_2^+\dot{v}_N^+\big)\big( (1-\hat{\sigma}^2)[\dot{H}_{\tau}] + \hat{\sigma}^2[\dot{H}_N]\big)-\dot{v}_N^+[\dot{p}]\big\}\big|_{x_1=0},
\end{split}
\end{equation}
where $\hat{\sigma}=\partial_t\hat{\varphi}/\sqrt{1+(\partial_2\hat{\varphi})^2}$ is the speed of the unperturbed discontinuity  (actually, the explicit form of the coefficients appearing in front of   $[\dot{H}_{\tau}]$ and $[\dot{H}_N]$ is of no interest for  forthcoming arguments).

The principal difference of the quadratic form  \eqref{qf1} from that for the nonrelativistic contact discontinuity  \cite{MTT1,MTT2} is the appearance of the term connected with the presence of $[\dot{H}_N]$. Precisely for this reason we had to modify the procedure of passing to homogeneous boundary conditions and, unlike  \cite{MTT1,MTT2}, we loose one derivative from the source term $f$ in the a priori estimate \eqref{main_estL}. Thanks to this modification we have condition \eqref{HN"} implying
\begin{equation}\label{qf2}
-\frac{1}{2}\big(  \widehat{\mathfrak{A}}_1\dot{U}\cdot  \dot{U}\big)\big|_{x_1=0}   =\widehat{\Gamma}^+\big\{
 (1-\hat{\sigma}^2)\big( \widehat{H}_N^+\dot{v}_2^+-\widehat{H}_2^+\dot{v}_N^+\big)[\dot{H}_{\tau}] -\dot{v}_N^+[\dot{p}]\big\}\big|_{x_1=0}.
\end{equation}

Further arguments totally coincide with those from  \cite{MTT1,MTT2}. For the reader's convenience we shortly comment obtaining the estimate for $\partial_2\dot{U}$. As in \cite{MTT2}, it is convenient to pass from the unknown  $\dot{U}$ to $V= (V^+,V^-)$, with
\[
V^{\pm}=(\dot{p}^{\pm},\dot{v}_N^{\pm},\dot{v}_2^{\pm},\dot{H}_N^{\pm},\dot{H}_{\tau}^{\pm},\dot{S}^{\pm})
\]
(recall that $\dot{U}^{\pm}=(\dot{p}^{\pm},\dot{u}^{\pm},\dot{H}^{\pm},\dot{S}^{\pm})$). The passage to  $V$ is not necessary (it was not done in \cite{MTT1})
but it simplifies the quadratic form with the boundary matrix for the $x_2$--derivatives of the unknown. It is easy to write down the transition matrix $J ={\rm diag}(J^+,J^-)$ depending on the basic state \eqref{a21} for which $\dot{U}^{\pm}=J^{\pm}V^{\pm}$. However, its concrete form is of no interest for deriving estimates. After passing to  $V$ and multiplying system \eqref{28} from the left by the matrix $J^{\sf T}$ we again have a symmetric hyperbolic system. Differentiating this system with respect to  $x_2$ and applying standard arguments of the energy method which are omitted here, we come to the inequality
\begin{equation}
\int\limits_{\Omega}\bigl({B}_0\partial_2V\cdot \partial_2V \bigr){\rm d}x +2 \int\limits_{\partial\Omega_t}Q{\rm d}x_2{\rm d}s \leq C\bigg\{ \|f\|^2_{H^1(\Omega_T)}+\int\limits_0^tI(s){\rm d}s \bigg\},
\label{q2}
\end{equation}
where $B_0=J^{\sf T}\widehat{\mathfrak{A}}_0J,\quad \widehat{\mathfrak{A}}_0={\rm diag}\big(A_0(\widehat{U}^++\bar{U}^+),A_0(\widehat{U}^-+\bar{U}^-)\big)$ and, in view of \eqref{qf2},
\[
Q=-\frac{1}{2}\big(  \widehat{\mathfrak{A}}_1\partial_2V\cdot \partial_2V \big)\big|_{x_1=0}   =\widehat{\Gamma}^+\big\{
 (1-\hat{\sigma}^2)R[\partial_2\dot{H}_{\tau}] -\partial_2\dot{v}_N^+[\partial_2\dot{p}]\big\}\big|_{x_1=0},
\]
with $R= \big(\widehat{H}_N^+\partial_2\dot{v}_2^+-\partial_2\widehat{H}_2^+\dot{v}_N^+\big)\big|_{x_1=0}$.

Following then  \cite{MTT1,MTT2},  from \eqref{dHN} at $x_1=0$ we deduce
\begin{equation}
R= -\partial_0\dot{H}_N^+  + ({f}^+_{H}\cdot \widehat{N}^+)+ \mbox{l.o.t.}\quad \mbox{on}\ \partial\Omega_T.
\label{R}
\end{equation}
Using the first, fourth and fifth boundary conditions in \eqref{29} (for $g=0$) and substituting the expression for $R$ from \eqref{R} into the quadratic form $Q$, we get
\begin{equation}
Q=\frac{1}{2}\partial_t\big\{ \widehat{\Gamma}^+[\partial_1\hat{p}](\partial_2\varphi)^2\big\}\big|_{x_1=0} +{\rm coeff}\,\partial_0\dot{H}_N^+\,\partial_2\varphi |_{x_1=0} +P,
\label{Q}
\end{equation}
where ${\rm coeff}=(1-\hat{\sigma}^2) [\partial_1\widehat{H}_{\tau}]$ (in fact, the explicit form of this coefficient is of no interest) and $P$ is the quadratic form on the boundary which is the sum of lower-order terms (in some sense) whose boundary integrals can be estimated by standard methods (for instance, by passing from the integral over the boundary $\partial\Omega_t$ to a volume integral over  $\Omega_t$ and integrating by parts, see \cite{MTT1,MTT2}). The first term in the right-hand side of  \eqref{Q} plays a crucial role because, in view of the inequality  $\widehat{\Gamma}^+>1$ and the Rayleigh--Taylor sign condition \eqref{RTL},
after the time integration it gives a control on the norm $\|\partial_2\varphi (t)\|_{L^2(\partial\Omega )}$ (it is only important here that the positive $\widehat{\Gamma}^+$ is separated from zero). Estimating the boundary integral of the second term in the right-hand side of equality \eqref{Q} is most complicated in the deduction of the a priori estimate  \eqref{main_estL'} (in the sense of the usage of nontrivial ideas). To this end one uses the control on the norm $\|\partial_2\varphi (t)\|_{L^2(\partial\Omega )}$, passage to a volume integral, integration by parts, the Young inequality, etc. Since for our problem these arguments totally coincide with those for the nonrelativistic case, we just again refer the reader to \cite{MTT1,MTT2}.

As a result, as in  \cite{MTT1}, we obtain an $L^2$ estimate for $\partial_2\dot{U}$ and $\partial_2\varphi$ whose combination with estimates  \eqref{d1U} and \eqref{mat} as well as with the $L^2$ estimate for $\partial_t\varphi$ following from the last boundary condition in \eqref{29} (for $g=0$) yields the energy inequality \eqref{ener}. This inequality implies the a priori estimate \eqref{main_estL'}, and after returning to inhomogeneous boundary conditions we obtain the a priori estimate \eqref{main_estL}. As in \cite{MTT1}, the proof of the existence of solutions to the linearized problem \eqref{28}--\eqref{30} is performed
by noncharacteristic ``strictly dissipative'' regularization of this problem (see \cite{MTT1} for more details). It is clear that the uniqueness of a solution follows from the basic a priori estimate \eqref{main_estL}.

\section{Sketch of the proof of Theorem \ref{t1}}

The proof of Theorem \ref{t1} is totally analogous to that for the nonrelativistic case \cite{MTT2}. Therefore, referring the reader to  \cite{MTT2}, here we just briefly outline the proof of the existence theorem for the nonlinear problem \eqref{11.1}--\eqref{13.1}. The uniqueness of a solution to the nonlinear problem is proved by a standard way (see, e.g., \cite{ST}) and follows from the basic a priori estimate \eqref{main_estL} for the linearized problem.

Before proving existence of solutions we should write down compatibility conditions for the initial data. After that, assuming that the initial data are compatible up to  order $\mu$ (see \cite{MTT2}), we define a so-called approximate solution  $((U^{a+},U^{a-}),\varphi^a)\in H^{\mu +1}(\Omega_T)\times H^{\mu +1}(\partial\Omega_T)$ satisfying the boundary conditions, the initial data and the equations
\begin{equation}
\partial_t^j\mathbb{L}(U^{a\pm},\Psi^{a\pm} )|_{t=0}=0 \quad\mbox{in}\ \Omega\ \mbox{for}\ j=0,\ldots , \mu -1,
\label{62}
\end{equation}
where  ${\Psi}^{a\pm} =\chi (\pm x_1)\varphi^a$. We then introduce the vector-functions
\[
f^{a\pm}:=\left\{ \begin{array}{lr}
- \mathbb{L}(U^{a\pm},{\Psi}^{a\pm} ) & \quad \mbox{for}\ t>0,\\
0 & \ \mbox{for}\ t<0,\end{array}\right.
\]
for which, by virtue of \eqref{62}, one can show that
\[
\|f^{a\pm}\|_{H^{\mu}(\Omega_T)}\leq \delta_0 (T),
\]
where the constant $\delta_0(T)\rightarrow 0$ as $T\rightarrow 0$.

After that in problem \eqref{11.1}--\eqref{13.1} we shift the unknown to the approximate solution. Denoting the shifted unknown as the original unknown $((U^{+},U^{-}),\varphi)$, we obtain for it the following problem with zero initial data:
\begin{align}
 \mathcal{ L}(U^{\pm} ,{\Psi}^{\pm})=f^{a\pm} &\quad\mbox{in}\ \Omega_T, \label{69'}\\[3pt]
 \mathcal{ B}(U^+,U^- ,\varphi )=0 &\quad\mbox{on}\ \partial\Omega_T,\label{70'}
\\[3pt]
 ((U^+,U^-),\varphi )=0 &\quad  \mbox{for}\ t<0,\label{71'}
\end{align}
where
\begin{align*}
 &\mathcal{ L}(U^{\pm} ,{\Psi}^{\pm} ):=\mathbb{L}(U^{a\pm} +U^{\pm}  ,{\Psi}^{a\pm}+{\Psi}^{\pm} ) -
\mathbb{L}(U^{a\pm} ,{\Psi}^{a\pm}),\\
 &\mathcal{ B}(U^+,U^- ,\varphi):=\mathbb{B}(U^{a+} +U^+ ,U^{a-} +U^-,\varphi^a+\varphi ).
\end{align*}

The existence of solutions of problem  \eqref{69'}--\eqref{71'} is proved by suitable Nash--Moser iterations. The general description of the Nash--Moser method for problems like \eqref{69'}--\eqref{71'} can be found in \cite{Sec} (see also references therein). The main idea is to solve the nonlinear equation
$F(u)=0$ by the iteration scheme
\[
F'(S_{\theta_n}u_n)(u_{n+1}-u_{n} )=-F(u_n ),
\]
where $F'$ is the linearization of $F$ and $S_{\theta_n}$ is a sequence of smoothing operators, with  $S_{\theta_n}\rightarrow I$ as $n\rightarrow \infty$. This scheme is the classical Newton scheme if $S_{\theta_n}= I$. The application of smoothing operators is needed for compensating the loss of derivatives in a priori estimates for the linearized problem. For problem \eqref{69'}--\eqref{71'} (as well as for corresponding problems in \cite{MTT2,ST,T09,Tcpam}) the Nash--Moser scheme is not quite standard because at each its step we have to construct an intermediate state $u_{n+1/2}$ satisfying the same restrictions as the basic \eqref{a21} about we have performed linearization, i.e., conditions \eqref{a5}--\eqref{jc1'}.

The basic a priori estimate \eqref{main_estL} is not enough for the proof of convergence of the Nash--Moser iterations. First, it is clear that we need a corresponding estimate in high norms, i.e., an estimate in the Sobolev spaces $H^s$, with $s$ being large enough, for estimating supremum norms through $H^s$ norms by embedding theorems. Second, for the proof of convergence of the Nash--Moser iterations we need a more delicate estimate showing not only how many derivatives are lost from the source terms to the solution but also the number of derivatives lost from the coefficients of the problem (i.e., from the basic state \eqref{a21}) to the solution.
Following \cite{MTT2} and taking into account the remarks made in the previous section for our relativistic case, we write down here such a delicate estimate. Such kind of estimates are usually called tame estimates.

\begin{theorem}
Let $T>0$ and $s\in \mathbb{N}$, with $s\geq 3$.  Assume that the basic state $(\widehat{U} ,\hat{\varphi})\in H^{s+3}(\Omega_T )\times H^{s+3}(\partial\Omega_T)$
satisfies assumptions \eqref{a22}--\eqref{jc1'},  the Rayleigh--Taylor sign condition \eqref{RTL} and the inequality
\begin{equation}
\|\widehat{U}\|_{H^6(\Omega_T )} +\|\hat{\varphi} \|_{H^{6}(\partial\Omega_T)}\leq \widehat{K},
\label{37}
\end{equation}
where $\widehat{K}>0$ is a constant. Let also the data $(f ,g)\in
H^{s+1}(\Omega_T )\times H^{s+1}(\partial\Omega_T)$ vanish in the past. Then there exists a positive constant $K_0$ that does not depend on $s$ and $T$
and there exists a constant $C(K_0) >0$ such that, if $\widehat{K}\leq K_0$, then there exists a unique solution $(\dot{U} ,\varphi)\in H^{s}(\Omega_T )\times H^{s}(\partial\Omega_T)$ to problem \eqref{28}--\eqref{30} that obeys the tame a priori  estimate
\begin{equation}
\begin{split}
\|\dot{U}\|_{H^s(\Omega_T )}+ & \|\varphi\|_{H^{s}(\partial\Omega_T)}\leq  C(K_0)\Bigl\{
\|f\|_{H^{s+1}(\Omega_T )}+ \|g \|_{H^{s+1}(\partial\Omega_T)} \\
 &+\bigl( \|f\|_{H^{4}(\Omega_T )}+ \| g\|_{H^{4}(\partial\Omega_T)} \bigr)\bigl(
\|\widehat{U}\|_{H^{s+3}(\Omega_T )}+\|\hat{\varphi}\|_{H^{s+3}(\partial\Omega_T)}\bigr)\Bigr\}
\end{split}
\label{38}
\end{equation}
for a sufficiently short time $T$ (the constant $C(K_0)$ depends also on the fixed constants $\bar{p}$, $\nu$, ${\kappa}$ and $\epsilon$ from \eqref{a5}, \eqref{ls}, \eqref{cdass} and \eqref{RT1}).
\label{t3.1}
\end{theorem}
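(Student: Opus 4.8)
The plan is to promote the basic $H^1$ estimate of Theorem~\ref{t1L} to the tame $H^s$ estimate \eqref{38} by running the energy method of the previous section on tangentially differentiated copies of problem \eqref{28}--\eqref{30}, while tracking the dependence on the coefficients (the basic state \eqref{a21}) through Moser-type calculus inequalities. First I would repeat the reduction to homogeneous boundary conditions, constructing $\widetilde U$ that satisfies \eqref{29} with $\varphi=0$ and imposing $\big[\widetilde H_N\big]=g_6$ with $g_6$ solving the transport equation \eqref{tHN}. The tame trace theorem applied to \eqref{tHN} gives $\|g_6\|_{H^{s+1/2}(\partial\Omega_T)}\le C\|f\|_{H^{s+1}(\Omega_T)}$, and the tame analogue of \eqref{tildU'} then bounds $\|\widetilde U\|_{H^{s+1}(\Omega_T)}$; because $\mathbb{L}'_e(\widehat U^\pm,\widehat\Psi^\pm)\widetilde U^\pm$ is a product of the coefficients with derivatives of $\widetilde U$, this step already produces the tame structure, i.e. a term linear in $\|\widehat U\|_{H^{s+3}}+\|\hat\varphi\|_{H^{s+3}}$ multiplied by a low norm of the data. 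One is thus reduced to the case $g=0$ together with the additional relation \eqref{HN"}.

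For the homogeneous problem I would induct on the order of tangential differentiation $\partial_{\rm tan}=(\partial_t,\partial_2)$. Applying $\partial_{\rm tan}^\alpha$ with $|\alpha|\le s$ to \eqref{28} leaves the same symmetric operator acting on $\partial_{\rm tan}^\alpha\dot U$ plus commutator terms, and the energy identity yields the volume energy plus the boundary quadratic form, whose structure after passing to the variables $V$ is exactly \eqref{qf2}. As in \eqref{q2}--\eqref{Q}, the Rayleigh--Taylor condition \eqref{RTL} converts the leading boundary term into a positive multiple of $\partial_t\big\{\widehat\Gamma^+[\partial_1\hat p](\partial_2\partial_{\rm tan}^{\alpha}\varphi)^2\big\}$, which upon time integration controls $\|\partial_2\varphi\|$ at the corresponding tangential order; combined with the last boundary condition in \eqref{29} for $\partial_t\varphi$ this furnishes $\|\varphi\|_{H^s(\partial\Omega_T)}$. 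The normal derivatives are then recovered precisely as in \eqref{d1Un}--\eqref{d1U'}: the noncharacteristic block $\dot U_n$ from the resolved interior system and the characteristic components $\dot S^\pm,\dot H_N^\pm$ from the transport equations \eqref{S} and the divergence equations \eqref{xi}, which require no boundary data since $\hat w_1^\pm|_{x_1=0}=0$.

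The \emph{tameness} of the resulting inequality is where the Moser calculus is essential. Each commutator $\big[\partial_{\rm tan}^\alpha,A(\widehat U+\bar U)\big]\partial\dot U$ and each product $(\partial_{\rm tan}^\alpha\mathcal C)\dot U$ must be controlled by the tame product rule $\|ab\|_{H^s}\le C\big(\|a\|_{H^s}\|b\|_{L^\infty}+\|a\|_{L^\infty}\|b\|_{H^s}\big)$ together with the composition estimate for $A(\widehat U+\bar U)$. The smallness hypothesis \eqref{37}, via Sobolev embedding, keeps every $L^\infty$- and low-Sobolev-type factor bounded by a fixed $C(K_0)$ and supplies the $W^2_\infty$ control \eqref{a22}, while the surviving high-order factor is exactly $\|\widehat U\|_{H^{s+3}}+\|\hat\varphi\|_{H^{s+3}}$; the accompanying low norm of $\dot U$ is absorbed through the basic estimate \eqref{main_estL} in terms of the low ($H^4$) norms of the data, producing the second term on the right of \eqref{38}. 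Assembling the interior and boundary contributions and applying Gronwall's lemma as in \eqref{ener} gives the $H^s$ energy inequality, and reinstating inhomogeneous boundary data through the tame bound on $\widetilde U$ yields \eqref{38}.

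The main obstacle will be the high-order boundary analysis of the term ${\rm coeff}\,\partial_0\dot H_N^+\,\partial_2\varphi|_{x_1=0}$ in \eqref{Q}: already in the basic estimate this is the only step needing genuinely nontrivial ideas (control of $\|\partial_2\varphi\|$ from the Rayleigh--Taylor term, passage to a volume integral, integration by parts, Young's inequality), and at order $s$ one must reproduce it so that the generated commutators remain tamely bounded and the derivative count stays within the budget of \eqref{38}. The relativistic complication is the extra $[\dot H_N]$ contribution exhibited in \eqref{qf1}: condition \eqref{HN"} eliminates it from the leading boundary form, but at high order the equation \eqref{tHN} for $g_6$, together with its commutators, must be carried along and estimated tamely, which is exactly the origin of the one-derivative loss in the source term noted after \eqref{main_estL}. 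Once this bookkeeping is arranged, the remaining steps are the same routine energy arguments as in the nonrelativistic treatment \cite{MTT1,MTT2}.
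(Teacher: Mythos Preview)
Your proposal is correct and follows essentially the same approach as the paper: the tame estimate \eqref{38} is obtained by rerunning the energy method used for the basic estimate \eqref{main_estL} on tangentially differentiated versions of \eqref{28}--\eqref{30}, with Moser-type (Gagliardo--Nirenberg) calculus inequalities handling the commutators, and the one-derivative loss in $f$ arising from the relativistic modification \eqref{tHN}--\eqref{HN"}. The paper in fact gives no more detail than this, referring to \cite{MTT2} for the full argument, so your sketch is already more explicit than the paper's own treatment.
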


The tame estimate \eqref{38} differs from that in the nonrelativistic case \cite{MTT2} only by the fact that in it, as in the basic estimate \eqref{main_estL}, we loose one derivative from the source term $f$. As was noted above, this does not influence on the final result for the nonlinear problem because exactly the estimate like \eqref{38} was used \cite{MTT2} for the proof of convergence of the Nash--Moser iterations. The deduction of estimate \eqref{38} itself is performed following the same procedure as that for deriving the basic a priori estimate \eqref{main_estL}. The only difference is that we have to apply Moser-type calculus inequalities (following form the Gagliardo--Nirenberg inequality) for estimating commutators, i.e., lower-order terms appearing after the higher-order differentiation of system \eqref{28} and the boundary conditions  \eqref{29} (see \cite{MTT2}).

This completes our short comments about the proof of Theorem \ref{t1} and for more details we again refer the reader to the proof of the analogous theorem in \cite{MTT2}.

\paragraph{Acknowledgements}
This work was supported by Mathematical Center in Akademgorodok and RFBR (Russian Foundation
for Basic Research) grant No. 19-01-00261-a.

\end{document}